\newcommand{\ra}[1]{\renewcommand{\arraystretch}{#1}}
\newif\ifarx
\DeclareMathOperator{\cw}{{\scriptstyle\mathcal{W}}}
\DeclareMathOperator{\bcw}{{\boldsymbol{\scriptstyle\mathcal{W}}}}
\DeclareMathOperator{\T}{\mathsf{T}}
\DeclareMathOperator{\E}{\mathds{E}}
\DeclareMathOperator{\w}{\boldsymbol{w}}
\DeclareMathOperator{\x}{\boldsymbol{x}}
\DeclareMathOperator{\s}{\boldsymbol{s}}
\DeclareMathOperator{\g}{\boldsymbol{g}}
\newtheorem{example}{Example}
\newtheorem{definition}{Definition}
\newtheorem{assumption}{Assumption}
\newtheorem{theorem}{Theorem}
\newtheorem{corollary}{Corollary}
\newtheorem{lemma}{Lemma}
\begin{document}%
\title{Distributed Learning in Non-Convex Environments -- Part I: Agreement at a Linear Rate }%
\author{Stefan Vlaski,~\IEEEmembership{Student Member,~IEEE,}
 				and Ali H. Sayed,~\IEEEmembership{Fellow,~IEEE}
\thanks{The authors are with the Institute of Electrical Engineering, \'{E}cole Polytechnique F\'{e}d\'{e}rale de Lausanne. S. Vlaski is also with the Department of Electrical Engineering, University of California, Los Angeles. This work was supported in part by NSF grant CCF-1524250. Emails:\{stefan.vlaski, ali.sayed\}@epfl.ch. A limited short version of this work appears in the conference publication~\cite{Vlaski19}.}}%
\maketitle
\begin{abstract}
Driven by the need to solve increasingly complex optimization problems in signal processing and machine learning, there has been increasing interest in understanding the behavior of gradient-descent algorithms in non-convex environments. Most available works on distributed non-convex optimization problems focus on the deterministic setting where exact gradients are available at each agent.
In  this work and its Part II, we consider stochastic cost functions, where exact gradients are replaced by stochastic approximations and the resulting gradient noise persistently seeps into the dynamics of the algorithm. We establish that the diffusion learning strategy continues to yield meaningful estimates non-convex scenarios in the sense that the iterates by the individual agents will cluster in a small region around the network centroid. We use this insight to motivate a short-term model for network evolution over a finite-horizon. In Part II~\cite{Vlaski19nonconvexP2} of this work, we leverage this model to establish descent of the diffusion strategy through saddle points in \( O(1/\mu) \) steps and the return of approximately second-order stationary points in a polynomial number of iterations.
\end{abstract}
\begin{IEEEkeywords}
Stochastic optimization, adaptation, non-convex cost, gradient noise, stationary points, distributed optimization, diffusion learning.
\end{IEEEkeywords}
\section{Introduction}\label{sec:intro}
\IEEEPARstart{T}{he} broad objective of distributed adaptation and learning is the solution of global, stochastic optimization problems by networked agents through localized interactions and in the absence of information about the statistical properties of the data.
When constant, rather than diminishing, step-sizes are employed, the resulting algorithms are adaptive in nature and are able to adapt to drifts in the data statistics. In this work, we consider a collection of \( N \) agents, where each agent \( k \) is equipped with a stochastic risk of the form \( J_k(w) = \E_x Q_k(w; \x_k) \) with \( Q_k(w;\x_k) \) referring to the loss function, \( w\in\mathds{R}^{M} \) denoting a parameter vector, and \( \x_k \) referring to the stochastic data. The expectation is over the probability distribution of the data. The objective of the network is to seek the Pareto solution:
\begin{equation}\label{eq:global_problem}
	\min_w J(w),\ \ \ \ \ \ \ \ \mathrm{where}\ J(w) \triangleq \sum_{k=1}^{N} p_k J_k(w)
\end{equation}
where the \( p_k \) are positive weights that are normalized to add up to one and will be specified further below; in particular, in the special case when the \(\{p_k\}\) are identical, they can be removed from~\eqref{eq:global_problem}. Algorithms for the solution of~\eqref{eq:global_problem} have been studied extensively over recent years both with inexact~\cite{Nedic09, Sayed14, Chen15transient, Xin19} and exact~\cite{Yuan16, Shi15, Yuan18} gradients. Here, we focus on the following diffusion strategy, which has been shown in previous works to provide enhanced performance and stability guarantees under constant step-size learning and adaptive scenarios~\cite{Sayed14proc, Sayed14}:
\begin{subequations}
\begin{align}
  \boldsymbol{\phi}_{k,i} &= \w_{k,i-1} - \mu \widehat{\nabla J}_{k}(\w_{k,i-1})\label{eq:adapt}\\
  \w_{k,i} &= \sum_{\ell=1}^{N} a_{\ell k} \boldsymbol{\phi}_{\ell,i}\label{eq:combine}
\end{align}
\end{subequations}
where \( \widehat{\nabla J}_{k}(\cdot) \) denotes a stochastic approximation for the true local gradient \( {\nabla J}_{k}(\cdot) \). {The intermediate estimate \( \boldsymbol{\phi}_{k,i} \) is obtained at agent \( k \) by taking a stochastic gradient update relative to the local cost \( J_k(\cdot) \). The intermediate estimates are then fused across local neighborhoods where} \( a_{\ell k} \) are convex combination weights satisfying:
\begin{equation}\label{eq:combinationcoef}
  a_{\ell k} \geq 0, \quad \sum_{\ell \in \mathcal{N}_k} a_{\ell k}=1, \quad a_{\ell k} = 0\ \mathrm{if}\ \ell \notin \mathcal{N}_k
\end{equation}
The symbol \( {\cal N}_k \) denotes the set of neighbors of agent \( k \).
\begin{assumption}[\textbf{Strongly-connected graph}]\label{as:strongly_connected}
	We shall assume that the graph described by the weighted combination matrix \(A=[a_{\ell k}]\) is strongly-connected~\cite{Sayed14}. This means that there exists a path with nonzero weights between any two agents in the network and, moreover, at least one agent has a nontrivial self-loop, \(a_{kk}>0\).\hfill\IEEEQED%
\end{assumption}

It then follows from the Perron-Frobenius theorem~\cite{Horn03,Pillai05,Sayed14} that \( A \) has a single eigenvalue at one while all other eigenvalues are strictly inside the unit circle, so that \( \rho(A)=1 \).  Moreover, if we let \(p\) denote the right-eigenvector of \(A\) that is associated with the eigenvalue at one, and if we normalize the entries of \(p\) to add up to one, then it also holds that all entries of \(p\) are strictly positive, i.e.,
\begin{equation}\label{eq:perron}
  Ap=p, \quad \mathds{1}^{\T} p=1, \quad p_k > 0
\end{equation}
where the \( \{ p_k \} \) denote the individual entries of the Perron vector, \(p\).

\subsection{Related Works}
The performance of the diffusion algorithm~\eqref{eq:adapt}--\eqref{eq:combine} has been studied extensively in differentiable settings~\cite{Sayed14proc, Chen15transient}, with extensions to multi-task~\cite{Nassif16}, constrained~\cite{Towfic15}, and non-differentiable~\cite{Ying18} environments. A common assumption in these works, along with others studying the behavior of distributed optimization algorithms in general, is that of \emph{convexity} (or strong-convexity) of the aggregate risk \(J(w)\).
While many problems of interest such as least-squares estimation~\cite{Sayed14}, logistic regression~\cite{Sayed14}, and support vector machines~\cite{Haerst98} are convex, there has been increased interest in the optimization of \emph{non-convex} cost functions. Such problems appear frequently in the design of robust estimators~\cite{Zoubir18} and the training of more complex machine learning architectures such as  those involving dictionary learning~\cite{Tosic11} and artificial neural networks~\cite{Choromanska14}.

Motivated by these applications, recent works have pursued the study of optimization algorithms for non-convex problems, both in the centralized and distributed settings~\cite{Gelfand91, Ge15, Lee16, Jin17, HadiDaneshmand18, Fang18, Allen18natasha, Allen18neon, Fang19, Jin19, Lorenzo16, Wang18, Tatarenko17, Scutari18, Swenson19, Jain17, Reddi16, Ge19}. While some works focus on establishing convergence to a stationary point~\cite{Lorenzo16, Wang18}, there has been growing interest in examining the ability of gradient descent implementations to escape from saddle points, since such points represent bottlenecks to the underlying learning problem~\cite{Choromanska14}. We defer a detailed discussion on the plethora of related works on second-order guarantees~\cite{Nesterov06, Gelfand91, Ge15, Lee16, Jin17, Scutari18, HadiDaneshmand18, Fang18, Allen18neon, Allen18natasha, Fang19, Jin19, Swenson19} to Part II~\cite{Vlaski19nonconvexP2}, where we will be establishing the ability of the diffusion strategy~\eqref{eq:adapt}--\eqref{eq:combine} to escape strict-saddle points efficiently. For ease of reference, the modeling conditions and results from this and related works are summarized in Table~\ref{tab:references}.

{The key contributions of Parts I and II this work are three-fold. To the best of our knowledge, we present the first analysis establishing \emph{efficient} (i.e., polynomial) escape from strict-staddle points in the \emph{distributed} setting. Second, we establish that the gradient noise process is sufficient to ensure efficient escape without the need to alter it by adding artificial forms of perturbations{, interlacing steps with small and large step-sizes, or imposing a dispersive noise assumption.} Third, relative to the existing literature on \emph{centralized} non-convex optimization, where the focus is mostly on deterministic or \emph{finite-sum} optimization, our modeling conditions are specifically tailored to the scenario of learning from stochastic \emph{streaming} data. In particular, we only impose bounds on the gradient noise variance in expectation, rather than assume a bound with probability one~\cite{HadiDaneshmand18, Fang19} or a sub-Gaussian distribution~\cite{Jin19}. Furthermore, we assume that any Lipschitz conditions only hold on the \emph{expected} stochastic gradient approximation, rather than for every realization, with probability one~\cite{Fang18, Allen18neon, Allen18natasha}.}
\begin{table*}\centering
\ra{1.3}
\begin{tabular}{@{}ccccccccc@{}}\toprule
& \multicolumn{5}{c}{Modeling conditions} & \phantom{abc}& \multicolumn{2}{c}{Results}\\ \cmidrule{2-6} \cmidrule{8-9}
& Gradient & Hessian & Initialization & Perturbations & Step-size && Stationary & Saddle \\ \midrule
\textbf{Centralized}\\
~\cite{Gelfand91}  & Lipschitz & --- & --- & SGD + Annealing & diminishing && \( \checkmark \) & asymptotic\(^{\dagger}\) \\
~\cite{Ge15}  & Lipschitz \& bounded\(^{\star}\) & Lipschitz & --- & i.i.d.\ and bounded w.p. 1 & constant && \( \checkmark \) & polynomial \\
~\cite{Lee16}  & Lipschitz & --- & Random & --- & constant && \( \checkmark \) & asymptotic \\
~\cite{Jin17}  & Lipschitz & Lipschitz & --- & Selective \& bounded w.p. 1 & constant && \( \checkmark \) & polynomial \\
~\cite{HadiDaneshmand18}  & Lipschitz & Lipschitz & --- & SGD, bounded w.p. 1 & alternating && \( \checkmark \) & polynomial \\
~\cite{Fang18}  & Lipschitz & Lipschitz & --- & Bounded variance, Lipschitz w.p. 1 & constant && \( \checkmark \) & polynomial \\
~\cite{Allen18natasha}  & Lipschitz & Lipschitz & --- & Bounded variance, Lipschitz w.p. 1 & constant && \( \checkmark \) & polynomial \\
~\cite{Allen18neon}  & Lipschitz & Lipschitz & --- & Bounded variance, Lipschitz w.p. 1 & constant && \( \checkmark \) & polynomial \\
~\cite{Fang19}  & Lipschitz & Lipschitz & --- & SGD, bounded w.p. 1 & constant && \( \checkmark \) & polynomial \\
~\cite{Jin19}  & Lipschitz & Lipschitz & --- & SGD + Gaussian &constant && \( \checkmark \)  & polynomial \\
\textbf{Decentralized}\\
~\cite{Lorenzo16} & Lipschitz \& bounded & --- & --- & ---&constant && \( \checkmark \) & ---\\
~\cite{Wang18} & Lipschitz & --- & --- & ---&constant && \( \checkmark \) & ---\\
~\cite{Tatarenko17} & Lipschitz \& bounded & --- & --- & i.i.d.&diminishing && \( \checkmark \) & ---\\
~\cite{Scutari18} & Lipschitz & Exists & Random & --- &constant && \( \checkmark \) & asymptotic\\
~\cite{Swenson19}  & Bounded disagreement & --- & --- & SGD + Annealing & diminishing && \( \checkmark \) & asymptotic\(^{\dagger}\) \\
\textbf{This work} & \textbf{Bounded disagreement} & \textbf{Lipschitz} & \textbf{---} & \textbf{Bounded moments} & \textbf{constant} && \( \boldsymbol{\checkmark} \) & \textbf{polynomial} \\ \bottomrule\
\end{tabular}
\caption{Comparison of modeling assumptions and results for gradient-based methods. Statements marked with \(^{\star}\) are not explicitly stated but are implied by other conditions. The works marked with \(^{\dagger}\) establish global (asymptotic) convergence, which of course implies escape from saddle-points.}\label{tab:references}
\end{table*}

\subsection{Preview of Results}
We first establish that in non-convex environments, as was already shown earlier in~\cite{Chen15transient} for convex environments, the evolution of the individual iterates \( \w_{k, i} \) at the agents continues to be well-described by the evolution of the weighted centroid vector \( \sum_{k=1}^N p_k \w_{k, i} \) in the sense that the iterates from across the network will cluster around this centroid after sufficient iterations. We subsequently consider two cases separately and establish descent in both of them. The first case corresponds to the region where the gradient at the network centroid is large and establish that descent can occur in one iteration. The second and more challenging case occurs when the gradient norm is small, but there is a sufficiently negative eigenvalue in the Hessian matrix. We establish Part II~\cite{Vlaski19nonconvexP2} that the recursion will continue to descend along the aggregate cost at a rate of \( O(\mu) \) per \( O(1/\mu) \) iterations. Combined with the first result, this descent relation allows us to provide guarantees about the second-order optimality of the returned iterates.

{The flow of the argument is summarized in Fig.~\ref{fig:classification_points}. We decompose \( \mathds{R}^M \) into the set of approximate first-order stationary points, i.e., those with \( \|\nabla J(w)\|^2 \le O(\mu) \) and the complement, i.e., the large-gradient regime. For the large-gradient regime, descent is established in Theorem~\ref{TH:DESCENT_RELATION}. We proceed to further decompose the set of approximate first-order stationary points into those that are \( \tau \)-strict-saddle, i.e., those that have a Hessian with significant negative eigenvalue \( \lambda_{\min}\left( \nabla^2 J(w) \right) \le -\tau \), and the complement, which are approximately second-order stationary points. For \( \tau \)-strict-saddle points we establish descent in Part II~\cite[Theorem 1]{Vlaski19nonconvexP2}. Finally, in Part II~\cite[Theorem 2]{Vlaski19nonconvexP2}, we conclude that the centroid will reach an approximately second-order stationary point in a \emph{polynomial} number of iterations.}
\begin{figure*}
	\centering
  \begin{tikzpicture}[grow cyclic, align=flush center,
                      level 1/.style={level distance=3.5cm, sibling angle = 40},
                      level 2/.style={level distance=4cm, sibling angle = 30}]
                      level 3/.style={level distance=7cm, sibling angle = 30}]
    \node[rounded corners, draw=blue!60, fill=blue!20, thick]{Network centroid \\\( \w_{c, i} \) at time \( i \)} [counterclockwise from=-20]
    	child {
      node[rounded corners, draw=green!60, fill=green!20, thick] {\textbf{NOT} \( O(\mu) \)-stationary \\  \( \|\nabla J(\w_{c, i})\|^2 > O(\mu) \)} [counterclockwise from=-15]
        child {
        node[rounded corners, draw=green!60, thick, fill=green!20] {Descent in one iteration by Theorem~\ref{TH:DESCENT_RELATION}: \\  \( \E \left \{ J(\w_{c, i}) - J(\w_{c, i+1}) | \w_{c, i} \in \mathcal{G} \right \} \ge O(\mu^2) \)}
        }
      }
    	child {
      node[rounded corners, draw=blue!60, fill=blue!20, thick] {\( O(\mu) \)-stationary \\  \( \|\nabla J(\w_{c, i})\|^2 \le O(\mu) \)} [counterclockwise from=-15]
        child {
          node[rounded corners, draw=red!60, thick, fill=red!20] {\( \tau \)-strict-saddle}
          child {
            node[rounded corners, draw=red!60, fill=red!20, thick]{Descent in \( i^s = O(1/(\mu \tau)) \) iterations in Part II~\cite[Theorem 1]{Vlaski19nonconvexP2}: \\  \( \E \left \{ J(\w_{c, i}) - J(\w_{c, i+i^s}) | \w_{c, i} \in \mathcal{H} \right \} \ge O(\mu) \)}
          }
        }
        child {
          node[rounded corners, draw=blue!60, thick, fill=blue!20] { \( \lambda_{\min}\left( \nabla^2 J(\w_{c, i}  \right) > -\tau  \)  }
          child {
            node[rounded corners, draw=blue!60, fill=blue!20, thick]{\( \w_{c, i} \) is approximately second-order stationary.}
          }
        }
      }
  ;
  \end{tikzpicture}
	\caption{Classification of approximately stationary points. Theorem~\ref{TH:DESCENT_RELATION} in this work establishes descent in the green branch. The red branch is treated in Part II~\cite[Theorem 1]{Vlaski19nonconvexP2}. The two results are combined in~\cite[Theorem 2]{Vlaski19nonconvexP2} to establish the return of a second-order stationary point with high probability.}\label{fig:classification_points}
\end{figure*}
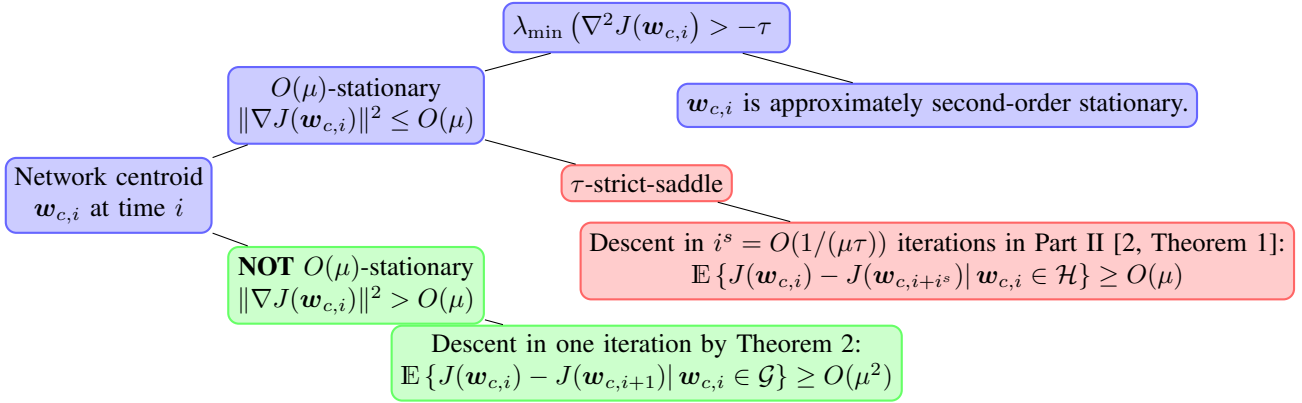

\section{Evolution Analysis}
We shall perform the analysis under the following common assumptions on the gradients and their approximations.
\begin{assumption}[\textbf{Lipschitz gradients}]\label{as:lipschitz}
  For each \( k \), the gradient \( \nabla J_k(\cdot) \) is Lipschitz, namely, for any \( x,y \in \mathds{R}^{M} \):
  \begin{equation}\label{eq:lipschitz}
    \|\nabla J_k(x) - \nabla J_k(y)\| \le \delta \|x-y\|
  \end{equation}
	In light of~\eqref{eq:global_problem} and Jensen's inequality, this implies for the aggregate cost:
	\begin{equation}\label{eq:lipschitz_global}
		\|\nabla J(x) - \nabla J(y)\| \le \delta \|x-y\|
	\end{equation}
\end{assumption}\hfill\IEEEQED%

{The Lipschitz gradient conditions~\eqref{eq:lipschitz} and~\eqref{eq:lipschitz_global} imply bounds on the both the function value and the Hessian matrix (when it exists), which will be used regularly throughout the derivations. In particular, we have for the function values:
\begin{align}
  J(y) \le J(x) + {\nabla J(x)}^{\T} \left( y-x \right)  + \frac{\delta}{2} {\|x-y\|}^2 \label{eq:quadratic_upper}
\end{align}
For the Hessian matrix we have~\cite{Sayed14}:
\begin{align}\label{eq:hessian_bound}
  - \delta I \le \nabla^2 J(x) \le \delta I
\end{align}
}
{
\begin{assumption}[\textbf{Bounded gradient disagreement}]\label{as:bounded}
  For each pair of agents \( k \) and \( \ell \), the gradient disagreement is bounded, namely, for any \( x \in \mathds{R}^{M} \):
  \begin{equation}\label{eq:bounded}
    \|\nabla J_k(x) - \nabla J_{\ell}(x)\| \le G
  \end{equation}
\end{assumption}\hfill\IEEEQED}

{This assumption is similar to the one used in~\cite{Swenson19} under a diminishing step-size with annealing. Note that condition~\eqref{eq:bounded} is weaker than the more common assumption of bounded gradients. Condition~\eqref{eq:bounded} is automatically satisfied in cases where the expected risks \( J_k(\cdot) \) are common (though agents still may see different realizations of data), or in the case of centralized stochastic gradient descent where the number of agents is one. This condition is also satisfied whenever agent-specific risks with bounded gradients are regularized by common regularizers with potentially unbounded gradients, as is common in many machine learning applications. Observe that~\eqref{eq:bounded} implies a similar condition on the deviation from the centralized gradient via Jensen's inequality:
\begin{align}
  &\: \left \|\nabla J_k(x) - \nabla J(x) \right \| =\left \| \sum_{\ell=1}^N p_{\ell}\left( \nabla J_k(x) - \nabla J_{\ell}(x) \right)\right\| \notag \\
  \le&\: \sum_{\ell=1}^N p_{\ell} \left\| \nabla J_k(x) - \nabla J_{\ell}(x)\right)\| \le G
\end{align}
}
\begin{definition}[\textbf{Filtration}]\label{def:filtration}
  We denote by \( \boldsymbol{\mathcal{F}}_{i} \) the filtration generated by the random processes \( \w_{k, j} \) for all \( k \) and \( j \le i \):
  \begin{equation}
    \boldsymbol{\mathcal{F}}_{i} \triangleq \left \{ \bcw_{0}, \bcw_{1}, \ldots, \bcw_{i} \right \}
  \end{equation}
  where \( \bcw_{j} \triangleq \mathrm{col}\left \{ \w_{1, j}, \ldots, \w_{k, j} \right \} \) contains the iterates across the network at time \( j \). Informally, \( \boldsymbol{\mathcal{F}}_{i} \) captures all information that is available about the stochastic processes \( \w_{k, j} \) across the network up to time \( i \).
\end{definition}\hfill\IEEEQED

{Throughout the following derivations, we will frequently rely on appropriate conditionings to make the analysis tractable. A frequent theme will be the exchange of conditioning on filtrations by conditioning on events. To this end, the following lemma will be used repeatedly.
\begin{lemma}[\textbf{Conditioning}]\label{LEM:CONDITIONING}
  Suppose \( \w \in \mathds{R}^M \) is a random variable measurable by \( \boldsymbol{\mathcal{F}} \). In other words, \( \w \) is deterministic conditioned on \( \boldsymbol{\mathcal{F}} \) and
  \begin{equation}
    \E \left \{ \w | \boldsymbol{\mathcal{F}} \right \} = \w
  \end{equation}
  Then,
  \begin{equation}
    \E \left \{ \E \left \{  \x | \boldsymbol{\mathcal{F}} \right \} | \w \in \mathcal{S} \right \} = \E \left \{ \x | \w \in \mathcal{S} \right \}
  \end{equation}
  for any deterministic set \( \mathcal{S} \subseteq \mathds{R}^M \) and random \( \x \in \mathds{R}^M \).
\end{lemma}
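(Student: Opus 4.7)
My plan is to recognize the identity as essentially the tower (smoothing) property of conditional expectation, once we observe that conditioning on the event \( \{\w \in \mathcal{S}\} \) is a coarser conditioning than conditioning on \( \boldsymbol{\mathcal{F}} \). Since \( \w \) is \( \boldsymbol{\mathcal{F}} \)-measurable by hypothesis and \( \mathcal{S} \) is deterministic (Borel), the event \( \{\w \in \mathcal{S}\} \) lies in \( \boldsymbol{\mathcal{F}} \), so its indicator \( \mathds{1}_{\{\w \in \mathcal{S}\}} \) is an \( \boldsymbol{\mathcal{F}} \)-measurable random variable. This is the pivotal observation that makes everything go through.

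First, I would unfold the outer conditioning through the elementary definition of conditional expectation with respect to an event, writing
\begin{equation}
  \E \left \{ \E \left \{ \x | \boldsymbol{\mathcal{F}} \right \} \big | \w \in \mathcal{S} \right \} = \frac{\E \left \{ \E \left \{ \x | \boldsymbol{\mathcal{F}} \right \} \mathds{1}_{\{\w \in \mathcal{S}\}} \right \}}{\Pr(\w \in \mathcal{S})}. \notag
\end{equation}
Next, since \( \mathds{1}_{\{\w \in \mathcal{S}\}} \) is \( \boldsymbol{\mathcal{F}} \)-measurable, I can pull it inside the inner conditional expectation, and then apply the standard tower property \( \E\{\E\{\,\cdot\,|\boldsymbol{\mathcal{F}}\}\} = \E\{\,\cdot\,\} \), yielding \( \E\{\x \mathds{1}_{\{\w \in \mathcal{S}\}}\} \) in the numerator. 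Dividing by \( \Pr(\w \in \mathcal{S}) \) then recovers exactly \( \E\{\x | \w \in \mathcal{S}\} \) by the definition of conditional expectation with respect to the event, completing the identity.

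The step I expect to be the only subtle one is handling the measurability bookkeeping, specifically ensuring that \( \{\w \in \mathcal{S}\} \in \boldsymbol{\mathcal{F}} \) so that the indicator can be passed through the inner conditional expectation; this is why the hypothesis that \( \w \) be \( \boldsymbol{\mathcal{F}} \)-measurable is essential, and the implicit assumption that \( \Pr(\w \in \mathcal{S}) > 0 \) is needed so that conditioning on the event is well-defined (degenerate sets can be discarded without loss). Beyond this, the calculation is a one-line application of the tower property and carries no genuine analytical difficulty.
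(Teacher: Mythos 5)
Your proposal is correct and follows essentially the same route as the paper: rewrite the event-conditioned expectation as \( \E\{\cdot\,\mathds{I}_{\mathcal{S}}(\w)\}/\Pr\{\w\in\mathcal{S}\} \), pull the \( \boldsymbol{\mathcal{F}} \)-measurable indicator into the inner conditional expectation, and apply the tower property. The only cosmetic difference is that you take the ratio formula as the definition of conditioning on an event, while the paper derives it via a total-expectation decomposition; the substance is identical, including your (and the paper's) requirement that \( \Pr\{\w\in\mathcal{S}\}>0 \).
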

\begin{proof}
  Denote by \( \mathds{I}_{\mathcal{S}}(\w) \) the random indicator function:
  \begin{equation}
    \mathds{I}_{\mathcal{S}}(\w) = \begin{cases} 1, \ \mathrm{if} \ \w \in \mathcal{S} \\ 0, \ \mathrm{otherwise}. \end{cases}
  \end{equation}
  Since \( \w \) is measurable by \( \boldsymbol{\mathcal{F}} \), then \( \mathds{I}_{\mathcal{S}}(\w) \) is measurable by \( \boldsymbol{\mathcal{F}} \) as well. In other words, the event \( \w \in \mathcal{S} \) is deterministic conditioned on \( \boldsymbol{\mathcal{F}} \). Furthermore, for the random variable \( \x \mathds{I}_{\mathcal{S}}(\w) \), we have:
  \begin{align}
    \E \left \{  \x \mathds{I}_{\mathcal{S}}(\w)  \right \} =&\: \E \left \{ \x \mathds{I}_{\mathcal{S}}(\w) | \w \in \mathcal{S} \right \} \cdot \mathrm{Pr} \left \{ \w \in \mathcal{S} \right \} \notag \\
    &\: + \E \left \{ \x \mathds{I}_{\mathcal{S}}(\w) | \w \notin \mathcal{S} \right \} \cdot \mathrm{Pr} \left \{ \w \notin \mathcal{S} \right \} \notag \\
    =&\: \E \left \{ \x | \w \in \mathcal{S} \right \} \cdot \mathrm{Pr} \left \{ \w \in \mathcal{S} \right \}\label{eq:equiv_exp0}
  \end{align}
  Rearranging yields:
  \begin{equation}\label{eq:equiv_exp}
    \E \left \{  \x | \w \in \mathcal{S} \right \} = \frac{\E \left \{  \x \mathds{I}_{\mathcal{S}}(\w)  \right \}}{ \mathrm{Pr} \left \{ \w \in \mathcal{S}  \right \}}
  \end{equation}
  {Similarly, for the random variable \( \E \left \{ \x | \boldsymbol{\mathcal{F}} \right \} \mathds{I}_{\mathcal{S}}(\w) \), we have:
  \ifarx \begin{align}
    &\: \E \left \{  \E \left \{ \x | \boldsymbol{\mathcal{F}} \right \} \mathds{I}_{\mathcal{S}}(\w)  \right \}\notag \\
    =&\: \E \left \{ \E \left \{ \x | \boldsymbol{\mathcal{F}} \right \} \mathds{I}_{\mathcal{S}}(\w) | \w \in \mathcal{S} \right \} \cdot \mathrm{Pr} \left \{ \w \in \mathcal{S} \right \} \notag \\
    &\: + \E \left \{ \E \left \{ \x | \boldsymbol{\mathcal{F}} \right \} \mathds{I}_{\mathcal{S}}(\w) | \w \notin \mathcal{S} \right \} \cdot \mathrm{Pr} \left \{ \w \notin \mathcal{S} \right \} \notag \\
    =&\: \E \left \{ \E \left \{ \x | \boldsymbol{\mathcal{F}} \right \} | \w \in \mathcal{S} \right \} \cdot \mathrm{Pr} \left \{ \w \in \mathcal{S} \right \}\label{eq:equiv_exp2}
  \end{align}}
  \else\begin{align}
    \E \left \{  \E \left \{ \x | \boldsymbol{\mathcal{F}} \right \} \mathds{I}_{\mathcal{S}}(\w)  \right \} = \E \left \{ \E \left \{ \x | \boldsymbol{\mathcal{F}} \right \} | \w \in \mathcal{S} \right \} \cdot \mathrm{Pr} \left \{ \w \in \mathcal{S} \right \}\label{eq:equiv_exp2}
  \end{align}}\fi
  It then follows that:
  \begin{align}
    &\E \left \{ \E \left \{  \x | \boldsymbol{\mathcal{F}} \right \} | \w \in \mathcal{S} \right \} \stackrel{\eqref{eq:equiv_exp2}}{=}\: \frac{\E \left \{ \E \left \{  \x | \boldsymbol{\mathcal{F}} \right \} \mathds{I}_{\mathcal{S}}(\w)  \right \}}{ \mathrm{Pr} \left \{ \w \in \mathcal{S}  \right \}} \notag \\
    \stackrel{(a)}{=}&\: \frac{\E \left \{ \E \left \{  \x \mathds{I}_{\mathcal{S}}(\w) | \boldsymbol{\mathcal{F}} \right \}  \right \}}{\mathrm{Pr} \left \{ \w \in \mathcal{S}  \right \}} \stackrel{(b)}{=}\: \frac{\E \left \{  \x \mathds{I}_{\mathcal{S}}(\w)  \right \}}{\mathrm{Pr} \left \{ \w \in \mathcal{S}  \right \}} \notag \\
    \stackrel{\eqref{eq:equiv_exp}}{=}&\: \E \left \{  \x | \w \in \mathcal{S}  \right \}
  \end{align}
  where in step \( (a) \) we pulled \( \mathds{I}_{\mathcal{S}}(\w) \) into the inner expectation, since it is deterministic conditioned on \( \boldsymbol{\mathcal{F}} \) and \( (b) \) follows from the law of total expectation.
\end{proof}}
\begin{assumption}[\textbf{Gradient noise process}]\label{as:gradientnoise}
  For each \( k \), the gradient noise process is defined as
  \begin{equation}
    \s_{k,i}(\w_{k,i-1}) = \widehat{\nabla J}_k(\w_{k,i-1}) - \nabla J_k(\w_{k,i-1})
  \end{equation}
  and satisfies
  \begin{subequations}
    \begin{align}
      \E \left\{ \s_{k,i}(\w_{k,i-1}) | \boldsymbol{\mathcal{F}}_{i-1} \right\} &= 0 \label{eq:conditional_zero_mean}\\
      \E \left\{ \|\s_{k,i}(\w_{k,i-1})\|^4 | \boldsymbol{\mathcal{F}}_{i-1} \right\} &\le \sigma^4 \label{eq:gradientnoise_fourth}
    \end{align}
  \end{subequations}
  for some non-negative constant \( \sigma^4 \). We also assume that the gradient noise pocesses are pairwise uncorrelated over the space conditioned on \( \boldsymbol{\mathcal{F}}_{i-1} \), i.e.:
  \begin{equation}
    \E \left\{ \s_{k,i}(\w_{k,i-1}) \s_{\ell,i}(\w_{\ell,i-1})^{\T} |  \boldsymbol{\mathcal{F}}_{i-1} \right\} = 0\label{eq:uncorrelated_noise}
  \end{equation}
  \hfill\IEEEQED%
\end{assumption}
Property~\eqref{eq:conditional_zero_mean} means that the gradient noise construction is unbiased on average. Property~\eqref{eq:gradientnoise_fourth} means that the fourth-moment of the gradient  noise is bounded. These properties are automatically satisfied for several costs of interest~\cite{Sayed14, Sayed14proc}. Note, that the bound on the fourth-order moment, in light of Jensen's intequality, immediately implies:
\begin{align}
  &\: \E \left\{ \|\s_{k,i}(\w_{k,i-1})\|^2 | \boldsymbol{\mathcal{F}}_{i-1} \right\} \notag \\
  \ifarx =&\: \E \left\{ \sqrt{\|\s_{k,i}(\w_{k,i-1})\|^4} | \boldsymbol{\mathcal{F}}_{i-1} \right\} \notag \\ \fi
  \le&\: \sqrt{\E \left\{ \|\s_{k,i}(\w_{k,i-1})\|^4 | \boldsymbol{\mathcal{F}}_{i-1} \right\}} \stackrel{\eqref{eq:gradientnoise_fourth}}{\le} \sigma^2 \label{eq:gradientnoise}
\end{align}
While our primary interest is in the development of algorithms that allow for learning from \emph{streaming} data, we remark briefly that the results obtained in this work are equally applicable to empirical risk minimization via stochastic gradient descent, by assuming that the streaming data is selected according to a particular distribution.

\begin{example}[Empirical Risk Minimization]
  {\rm Suppose the costs \( J_k(\cdot) \) are empirical based on locally collected data \( {\left \{ x_{k, s} \right \}}_{s=1}^S \) and take the form:}
  \begin{equation}
    J_k(w) = \frac{1}{S} \sum_{s=1}^{S} Q(w, x_{k, s})
  \end{equation}
\end{example}
\noindent In empirical risk minimization (ERM) problems, we are interested in finding a vector \( w^o \) that minimizes the following empirical risk over the data across the \emph{entire} network:
\begin{equation}
  w^o \triangleq \arg\min_w \frac{1}{N} \sum_{k=1}^N \left( \frac{1}{S} \sum_{s=1}^S Q(w, x_{k, s}) \right)\label{eq:emp_global}
\end{equation}
If we introduce the uniformly-distributed random variable \( \x_{k} = x_{k, s} \) with probability \( \frac{1}{S} \) for all \( s \), then the cost~\eqref{eq:emp_global} is equivalent to solving:
\begin{equation}
  w^o = \arg\min_w \frac{1}{N} \sum_{k=1}^N \E_{\x_k} Q(w, \x_{k})
\end{equation}
which is of the same form as~\eqref{eq:global_problem} with \( p_k = \frac{1}{N} \). The resulting gradient noise process satisfies the assumptions imposed in this work under appropriate conditions on the risk \( Q(\cdot, \cdot) \). This observation has been leveraged to accurately quantify the performance of stochastic gradient descent, as well as mini-batch and importance sampling generalizations, for emprical minimization of convex risks in~\cite{Yuan16}.\hfill\IEEEQED%

\subsection{Network basis transformation}
\noindent In analyzing the dynamics of the distributed algorithm~\eqref{eq:adapt}--\eqref{eq:combine}, it is useful to introduce the following extended quantities by collecting variables from across the network:
\begin{align}
  \bcw_{i} &\triangleq \mathrm{col} \left \{ \w_{1,i}, \ldots, \w_{N,i} \right \} \\
  \mathcal{A} &\triangleq A \otimes I_M\\
  \widehat{\g}(\bcw_{i}) &\triangleq \mathrm{col} \left \{ \widehat{\nabla J_1}(\w_{1,i}), \ldots , \widehat{\nabla J}_N(\w_{N,i}) \right \}
\end{align}
where \( \otimes \) denotes the Kronecker product operation. We can then write the diffusion recursion~\eqref{eq:adapt}--\eqref{eq:combine} compactly as
\begin{equation}\label{eq:recursion}
	\bcw_i = \mathcal{A}^{\T} \left( \bcw_{i-1} - \mu \widehat{\g}(\bcw_{i-1}) \right)
\end{equation}
By construction, the combination matrix \( A \) is left-stochastic and primitive and hence admits a Jordan decomposition of
the form \( A = V_{\epsilon} J V_{\epsilon}^{-1} \) with~\cite{Sayed14,Chen15transient}:
\begin{equation}\label{eq:jordan}
  V_{\epsilon} = \left[ \begin{array}{cc} p & V_R \end{array} \right],
  \ \ J = \left[ \begin{array}{cc} 1 & 0\\0 & J_{\epsilon} \end{array}\right],
  \ \ V_{\epsilon}^{-1} = \left[ \begin{array}{c} \mathds{1}^{\T} \\ \vphantom{O^{O^{O^O}}} V_L^{\T} \end{array}\right]
\end{equation}
where \( J_{\epsilon} \) is a block Jordan matrix with the eigenvalues \( \lambda_2(A) \) through \( \lambda_N(A) \) on
the diagonal and \( \epsilon \) on the first lower sub-diagonal. The extended matrix \( \mathcal{A} \) then satisfies \(
\mathcal{A} = \mathcal{V}_{\epsilon} \mathcal{J} \mathcal{V}_{\epsilon}^{-1} \) with \( \mathcal{V}_{\epsilon} =
{V}_{\epsilon} \otimes I_N \), \( \mathcal{J} = J \otimes I_N \), \( \mathcal{V}_{\epsilon}^{-1} = {V}_{\epsilon}^{-1}
\otimes I_N \). The spectral properties of \( A \) and its corresponding eigendecomposition have been exploited extensively in the study of the diffusion learning strategy in the \emph{convex} setting~\cite{Sayed14, Chen15transient}, and will continue to be useful in \emph{non-convex} scenarios.

Multiplying both sides of~\eqref{eq:recursion} by \( \left( p^{\T} \otimes I \right) \) from the left, we obtain in light of~\eqref{eq:perron}:
\begin{equation}
	\left( p^{\T} \otimes I \right) \bcw_i = \left( p^{\T} \otimes I \right) \bcw_{i-1} - \mu \left( p^{\T} \otimes I \right) \widehat{\g}(\bcw_{i-1})
\end{equation}
Letting \( \w_{c, i} \triangleq \sum_{k=1}^K p_k \w_{k, i} = \left( p^{\T} \otimes I \right) \bcw_i \) and exploiting the block-structure of the gradient term, we find:
\begin{equation}
	\w_{c, i} = \w_{c, i-1} - \mu \sum_{k=1}^N p_k \widehat{\nabla J}_k (\w_{k, i-1})
\end{equation}
Note that \( \w_{c, i} \) is a convex combination of iterates across the network and can be viewed as a weighted centroid. The recursion for \( \w_{c, i} \) is reminiscent of a stochastic gradient step associated with the aggregate cost \( \sum_{k=1}^N p_k J_k(w) \) with the exact gradients \( \nabla J_k(\cdot) \) replaced by stochastic approximations \( \widehat{\nabla J}_k(\cdot) \) \emph{and} with the stochastic gradients evaluated at \( \w_{k, i-1} \), rather than \( \w_{c, i-1} \). In fact, we can write:
\begin{align}\label{eq:perturbed_gradient_descent}
	\w_{c, i} = \w_{c, i-1} - \mu \sum_{k=1}^N p_k {\nabla J}_k (\w_{c, i-1}) - \mu \boldsymbol{d}_{i-1} - \mu \s_i
\end{align}
where we defined the perturbation terms:
\begin{align}
	\boldsymbol{d}_{i-1} &\triangleq \sum_{k=1}^N p_k \left( {\nabla J}_k (\w_{k, i-1}) - {\nabla J}_k (\w_{c, i-1}) \right) \\
	\boldsymbol{s}_{i} &\triangleq \sum_{k=1}^N p_k \left( \widehat{\nabla J}_k (\w_{k, i-1}) - {\nabla J}_k (\w_{k, i-1}) \right) \label{eq:centralized_gradient_noise}
\end{align}
{We use the subscript \( i-1 \) for \( \boldsymbol{d}_{i-1} \) to emphasize that it depends on data up to time \( i-1 \), in contrast to \( \s_i \) which is also dependent on the most recent data from time \( i \).} Observe that \( \boldsymbol{d}_{i-1} \) arises from the disagreement within the network, and in particular that if each \( \w_{k, i-1} \) remains close to the network centroid \( \w_{c, i-1} \), this perturbation will be small in light of the Lipschitz condition~\eqref{eq:lipschitz} on the gradients. The second perturbation term \( \s_i \) arises from the noise introduced by stochastic gradient approximations at each agent. We now establish that recursion~\eqref{eq:perturbed_gradient_descent} will continue to exhibit some of the desired properties of (centralized) gradient descent, despite the presence of persistent and coupled perturbation terms.

\subsection{Network disagreement}
To begin with, we study more closely the evolution of the individual estimates \( \w_{k, i} \) relative to the network centroid \( \w_{c, i} \). Multiplying~\eqref{eq:recursion} by \( \mathcal{V}_R^{\T} \triangleq \left( {V}_R^{\T} \otimes I \right) \) from the left yields in light of~\eqref{eq:jordan}:
\begin{align}\label{eq:recursive_disagreement}
	\mathcal{V}_R^{\T} \bcw_i &= \mathcal{V}_R^{\T} \mathcal{A}^{\T} \bcw_{i-1} - \mu \mathcal{V}_R^{\T} \mathcal{A}^{\T}  \widehat{\g}(\bcw_{i-1}) \notag \\
	\ifarx &= \mathcal{V}_R^{\T} \mathcal{A}^{\T} \mathcal{V}_L \mathcal{V}_R^{\T} \bcw_{i-1} - \mu \mathcal{V}_R^{\T} \mathcal{A}^{\T} \mathcal{V}_L \mathcal{V}_R^{\T} \widehat{\g}(\bcw_{i-1}) \notag \\ \fi
	&= J_{\epsilon}^{\T} \mathcal{V}_R^{\T} \bcw_{i-1} - \mu J_{\epsilon}^{\T} \mathcal{V}_R^{\T} \widehat{\g}(\bcw_{i-1})
\end{align}
Then, for the deviation from the network centroid:
\begin{align}
  &\:\bcw_{i} - \bcw_{c, i} \notag \\
  \ifarx =&\:\bcw_{i} - \left( \mathds{1} p^{\T} \otimes I \right) \bcw_{i} \notag \\ \fi
  =&\: \left( I - \left( \mathds{1} p^{\T} \otimes I \right) \right) \bcw_{i} \notag \\
  =&\: \left( {\left({V_{\epsilon}}^{-1} \otimes I\right)}^{\T} {\left(V_{\epsilon} \otimes I\right)}^{\T} - \left( \mathds{1} p^{\T} \otimes I \right) \right) \bcw_{i} \notag \\
  \stackrel{\eqref{eq:jordan}}{=}&\: \mathcal{V}_L \mathcal{V}_R^{\T} \bcw_{i} \label{eq:deviation_from_centroid_transform}
\end{align}
so that the deviation from the centroid can be easily recovered from \( \mathcal{V}_R^{\T} \bcw_{i} \) in~\eqref{eq:recursive_disagreement}. Proceeding with~\eqref{eq:recursive_disagreement}, we find:
\begin{align}
	&\:{\left \| \mathcal{V}_R^{\T} \bcw_i \right \|}^4 \notag \\
	\ifarx =&\: {\left \| J_{\epsilon}^{\T} \mathcal{V}_R^{\T} \bcw_{i-1} - \mu J_{\epsilon}^{\T} \mathcal{V}_R^{\T} \widehat{\g}(\bcw_{i-1}) \right \|}^4 \notag \\ \fi
	\stackrel{(a)}{\le}&\: {\left \|J_{\epsilon}^{\T} \right \|}^4 {\left \| \mathcal{V}_R^{\T} \bcw_{i-1} - \mu \mathcal{V}_R^{\T} \widehat{\g}(\bcw_{i-1}) \right \|}^4 \notag \\
	\stackrel{(b)}{\le}&\: {\left \|J_{\epsilon}^{\T} \right \|} {\left \| \mathcal{V}_R^{\T} \bcw_{i-1}  \right \|}^4 + \mu^4 \frac{{\left \|J_{\epsilon}^{\T} \right \|}^4}{\left(1-{\left \|J_{\epsilon}^{\T} \right \|}\right)^3} {\left \| \mathcal{V}_R^{\T} \widehat{\boldsymbol{g}}(\bcw_{i-1}) \right \|}^4 \label{eq:disagreement_inequality}
\end{align}
where \( (a) \) follows from the sub-multiplicative property of norms, and \( (b) \) follows from Jensen's inequality \( {\|a + b\|}^4 \le \frac{1}{\alpha^3} \|a\|^4 + \frac{1}{\left(1-\alpha\right)^3} \|b\|^4 \) with
\begin{equation}
	\alpha = \left \|J_{\epsilon}^{\T} \right \| \triangleq \sqrt{\rho\left( J_{\epsilon} J_{\epsilon}^{\T} \right)} \le \sqrt{\left \| J_{\epsilon} J_{\epsilon}^{\T} \right \|_1} \le \sqrt{\lambda_2^2 + \epsilon^2} < 1
\end{equation}
for sufficiently small \( \epsilon \) due to Assumption~\ref{as:strongly_connected}, where \( \lambda_2 \triangleq \rho \left( A - \mathds{1} p^{\T} \right) \). We observe that the term \( {\left \| \mathcal{V}_R^{\T} \bcw_i \right \|}^4 \) contracts at an exponential rate given by \( \left \| J_{\epsilon}^{\T} \right \| \approx \lambda_2 \) for small \( \epsilon \), also known as the mixing rate of the graph. Iterating this relation and applying Assumptions~\ref{as:strongly_connected}--\ref{as:gradientnoise}, we obtain the following result.

\begin{theorem}[\textbf{Network disagreement (4th order)}]\label{LEM:NETWORK_DISAGREEMENT_FOURTH}
	Under assumptions~\ref{as:strongly_connected}--\ref{as:gradientnoise}, the network disagreement is bounded after sufficient iterations \( i \ge i_o \) by:
	\begin{align}
		&\:\E {\left \| \bcw_i - \left( \mathds{1} p^{\T} \otimes I \right) \bcw_{i} \right \|}^4 \notag \\
    \le&\: \mu^4 {\left \| \mathcal{V}_L \right \|}^4 \frac{{\left \|J_{\epsilon}^{\T} \right \|}^4}{{\left(1-{\left \|J_{\epsilon}^{\T} \right \|}\right)}^4} {\| \mathcal{V}_R^{\T} \|}^4 N^2 \left( G^4 + \sigma^4 \right) + o(\mu^4)\label{eq:network_disagreement_fourth}
	\end{align}
  where
  \begin{equation}
    i_o = \frac{\log\left( o(\mu^4) \right)}{\log\left( {\left \|J_{\epsilon}^{\T} \right \|} \right)}
  \end{equation}
  and \( o(\mu^4) \) denotes a term that is higher in order than \( \mu^4 \).
\end{theorem}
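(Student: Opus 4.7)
The plan is to upgrade the deterministic per-step inequality~\eqref{eq:disagreement_inequality} into an expected scalar linear recursion for $a_i \triangleq \E {\left\|\mathcal{V}_R^{\T} \bcw_i\right\|}^4$, and then unroll it. The coefficient on $a_{i-1}$ inherited from~\eqref{eq:disagreement_inequality} is $\alpha \triangleq \|J_{\epsilon}^{\T}\|<1$ by Assumption~\ref{as:strongly_connected}, so once the driving term $\mu^4 c \, \E \|\mathcal{V}_R^{\T} \widehat{\g}(\bcw_{i-1})\|^4$ is bounded by a deterministic $O(\mu^4)$ quantity, a geometric-contraction argument will yield the claimed steady-state bound plus a transient decaying at rate $\alpha$, which after $i \ge i_o = \log(o(\mu^4))/\log\alpha$ iterations becomes negligible.

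The core step is therefore to control $\E \|\mathcal{V}_R^{\T} \widehat{\g}(\bcw_{i-1})\|^4$. I would exploit that $V_R^{\T}\mathds{1}=0$ (immediate from $V_\epsilon^{-1} V_\epsilon = I$), so that $\mathcal{V}_R^{\T}(\mathds{1}\otimes v)=0$ for any $v \in \mathds{R}^M$; this allows subtracting $\mathds{1}\otimes \nabla J(\w_{c,i-1})$ inside the norm at no cost. Block-wise, I would decompose
\begin{align*}
\widehat{\nabla J}_k(\w_{k,i-1})-\nabla J(\w_{c,i-1}) =&\: \s_{k,i}(\w_{k,i-1}) + \bigl(\nabla J_k(\w_{k,i-1})-\nabla J_k(\w_{c,i-1})\bigr) \\
&\: + \bigl(\nabla J_k(\w_{c,i-1})-\nabla J(\w_{c,i-1})\bigr),
\end{align*}
and bound the three pieces via Assumption~\ref{as:gradientnoise}, the Lipschitz bound~\eqref{eq:lipschitz}, and the Jensen corollary of Assumption~\ref{as:bounded} controlling the deviation from the centralized gradient. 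Taking fourth powers, using $\|v\|^4=\bigl(\sum_k\|v_k\|^2\bigr)^2 \le N\sum_k\|v_k\|^4$ to sum over agents, and conditioning on $\boldsymbol{\mathcal{F}}_{i-1}$ so that $\E\|\s_{k,i}\|^4 \le \sigma^4$ via~\eqref{eq:gradientnoise_fourth}, yields an inequality of the form
\begin{equation*}
\E\|\mathcal{V}_R^{\T}\widehat{\g}(\bcw_{i-1})\|^4 \le \|\mathcal{V}_R^{\T}\|^4\bigl[c_1 N^2(G^4+\sigma^4) + c_2 N\delta^4\,\E\|\bcw_{i-1}-\bcw_{c,i-1}\|^4\bigr]
\end{equation*}
with absolute constants $c_1, c_2$.

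To close the recursion I would use identity~\eqref{eq:deviation_from_centroid_transform} to bound $\E\|\bcw_{i-1}-\bcw_{c,i-1}\|^4 \le \|\mathcal{V}_L\|^4 a_{i-1}$ and substitute back into~\eqref{eq:disagreement_inequality} after taking expectations, producing
\begin{equation*}
a_i \le \bigl(\alpha + O(\mu^4)\bigr) a_{i-1} + \mu^4 \frac{\alpha^4}{(1-\alpha)^3} \|\mathcal{V}_R^{\T}\|^4 N^2 c_1 (G^4+\sigma^4).
\end{equation*}
For small enough $\mu$, the effective rate $\beta=\alpha+O(\mu^4)$ remains strictly less than one, and iterating gives $a_i \le \beta^i a_0 + \mu^4 \cdot \mathrm{const} \cdot (1-\beta^i)/(1-\beta)$; for $i\ge i_o$ the transient $\beta^i a_0$ is absorbed into $o(\mu^4)$ while $1/(1-\beta)=1/(1-\alpha)+O(\mu^4)$, which upgrades the denominator from $(1-\alpha)^3$ to $(1-\alpha)^4$ and yields the stated ratio $\alpha^4/(1-\alpha)^4$. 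A final multiplication by $\|\mathcal{V}_L\|^4$ through~\eqref{eq:deviation_from_centroid_transform} converts the bound on $a_i$ into the claimed bound on $\E\|\bcw_i-\bcw_{c,i}\|^4$. The main delicacy is precisely this self-referential closure: the Lipschitz splitting reintroduces the disagreement on the right-hand side, so it must be folded back onto $a_{i-1}$ rather than left as exogenous forcing; fortunately it enters multiplied by $\mu^4$, so it only perturbs the contraction rate mildly and does not spoil stability.
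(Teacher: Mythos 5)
Your proposal is correct and reaches the stated bound, but the key bounding step is handled differently from the paper. The paper splits \( \widehat{\g}(\bcw_{i-1}) \) into the true gradient plus noise, and bounds the true-gradient part \emph{directly} through the bounded-disagreement assumption (after subtracting \( (\mathds{1}p^{\T}\otimes I)g(\bcw_{i-1}) \), which is annihilated by \( \mathcal{V}_R^{\T} \)); this makes the forcing term an exogenous constant \( O(\mu^4) N^2(G^4+\sigma^4) \) and leaves the recursion a clean contraction with rate exactly \( \|J_{\epsilon}^{\T}\| \), so no stability perturbation argument is needed. You instead center the stochastic gradients at \( \mathds{1}\otimes\nabla J(\w_{c,i-1}) \) and use a three-way decomposition (noise, Lipschitz term \( \nabla J_k(\w_{k,i-1})-\nabla J_k(\w_{c,i-1}) \), and disagreement \( \nabla J_k(\w_{c,i-1})-\nabla J(\w_{c,i-1}) \)), which reintroduces \( \E\|\bcw_{i-1}-\bcw_{c,i-1}\|^4 \) on the right and forces the self-referential closure \( a_i \le (\alpha+O(\mu^4))a_{i-1}+O(\mu^4) \); you correctly observe that this only perturbs the rate at order \( \mu^4 \), so the geometric argument, the upgrade of \( (1-\alpha)^3 \) to \( (1-\alpha)^4 \), and the \( o(\mu^4) \) transient all survive (the threshold \( i_o \) is inflated only by a factor \( 1+O(\mu^4) \), immaterial here). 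Your route is slightly heavier, but it buys rigor at a point where the paper is loose: the paper's step equating the blocks of \( (\mathds{1}p^{\T}\otimes I)g(\bcw_{i-1}) \) with \( \nabla J(\w_{k,i-1}) \) implicitly applies Assumption~\ref{as:bounded} to gradients evaluated at \emph{different} points, whereas your centering at the common point \( \w_{c,i-1} \) plus the Lipschitz condition is exactly the fix. One cosmetic caveat: your Jensen splittings produce absolute constants (e.g.\ a factor from \( \|a+b+c\|^4 \)) multiplying the leading term, so strictly you prove the bound up to a universal constant; the paper's own appendix has the same feature (a factor 8 appears in the derivation but not in the theorem statement), and since the result is only invoked at the \( O(\mu^4) \) level this is immaterial.
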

\begin{IEEEproof}
	Appendix~\ref{ap:network_disagreement_fourth}.
\end{IEEEproof}
\noindent Note again, that Jensen's inequality immediately implies for the second-order moment:
\begin{align}
  &\:\E {\left \| \bcw_i - \left( \mathds{1} p^{\T} \otimes I \right) \bcw_{i} \right \|}^2 \notag \\
  \ifarx =&\:\E \sqrt{{\left \| \bcw_i - \left( \mathds{1} p^{\T} \otimes I \right) \bcw_{i} \right \|}^4} \notag \\ \fi
  \le&\:\sqrt{\E {\left \| \bcw_i - \left( \mathds{1} p^{\T} \otimes I \right) \bcw_{i} \right \|}^4} \notag \\
  \stackrel{(a)}{\le}&\: \mu^2 {\left \| \mathcal{V}_L \right \|}^2 \frac{{\left \|J_{\epsilon}^{\T} \right \|}^2}{{\left(1-{\left \|J_{\epsilon}^{\T} \right \|}\right)}^2} {\| \mathcal{V}_R^{\T} \|}^2 N \left( G^2 + \sigma^2 \right) + o(\mu^2)\label{eq:network_disagreement}
\end{align}
where \( (a) \) follows from~\eqref{eq:network_disagreement_fourth} and sub-additivity of the square root, i.e. \( \sqrt{x+y} \le \sqrt{x} + \sqrt{y} \). This result establishes that, for every agent \( k \), we have after sufficient iterations \( i \ge i_o \):
\begin{equation}
  \E {\|\w_{k, i} - \w_{c, i} \|}^2 \le O(\mu^2)
\end{equation}
or, by Markov's inequality~\cite{Klenke13}:
\begin{equation}
  \mathrm{Pr} \left \{ {\|\w_{k, i} - \w_{c, i} \|}^2 \ge O(\mu) \right \} \le O(\mu)
\end{equation}
and hence \( \w_{k, i} \) will be arbitrarily close to \( \w_{c, i} \) with arbitrarily high probability for all agents. This result has two implications. First, it allows us to use the network centroid \( \w_{c, i} \) as a proxy for all iterates \( \w_{k, i} \) in the network, since all agents will cluster around the network centroid after sufficient iterations. Second, it allows us to bound the perturbation terms encountered in~\eqref{eq:perturbed_gradient_descent}.
\begin{lemma}[\textbf{Perturbation bounds (2nd and 4th order)}]\label{LEM:PERTURBATION_BOUNDS_FOURTH}
	Under assumptions~\ref{as:strongly_connected}--\ref{as:gradientnoise} and for sufficiently small step-sizes \( \mu \), the perturbation terms are bounded as:
	\begin{align}
		{\left( \E {\|\boldsymbol{d}_{i-1}\|}^2 \right)}^2 &\le \E {\|\boldsymbol{d}_{i-1}\|}^4 \le O(\mu^4) \label{eq:d_omufourth}\\
		{\left( \E \left \{ {\|\boldsymbol{s}_{i}\|}^2 | \boldsymbol{\mathcal{F}}_{i-1} \right \}\right)}^2 &\le \E \left \{ {\|\boldsymbol{s}_{i}\|}^4 | \boldsymbol{\mathcal{F}}_{i-1} \right \} \le \sigma^4
	\end{align}
	after sufficient iterations \( i \ge i_0 \).
\end{lemma}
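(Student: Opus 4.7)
The plan is to establish each of the four bounds separately, relying on Jensen's inequality for convex combinations, the Lipschitz condition on the gradients, the gradient noise assumption, and the network disagreement bound from Theorem~\ref{LEM:NETWORK_DISAGREEMENT_FOURTH}.

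First I would dispatch the inner inequalities \( {(\E \|\boldsymbol{d}_{i-1}\|^2)}^2 \le \E \|\boldsymbol{d}_{i-1}\|^4 \) and \( {(\E\{\|\boldsymbol{s}_i\|^2 | \boldsymbol{\mathcal{F}}_{i-1}\})}^2 \le \E\{\|\boldsymbol{s}_i\|^4 | \boldsymbol{\mathcal{F}}_{i-1}\} \). These are simply Jensen's inequality applied to the convex map \( x \mapsto x^2 \) (equivalently, Cauchy--Schwarz), and are essentially immediate.

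For the outer bound on \( \boldsymbol{d}_{i-1} \), the key step is to push the fourth power inside the convex combination using Jensen's inequality, \( \|\sum_k p_k x_k\|^4 \le \sum_k p_k \|x_k\|^4 \), since the \( p_k \) are nonnegative and sum to one. Combining this with the Lipschitz assumption~\eqref{eq:lipschitz} gives
\begin{equation}
\|\boldsymbol{d}_{i-1}\|^4 \le \delta^4 \sum_{k=1}^{N} p_k \|\w_{k, i-1} - \w_{c, i-1}\|^4 \le \delta^4 \|\bcw_{i-1} - (\mathds{1}p^{\T}\otimes I) \bcw_{i-1}\|^4.
\end{equation}
Taking expectations and invoking Theorem~\ref{LEM:NETWORK_DISAGREEMENT_FOURTH} yields \( \E\|\boldsymbol{d}_{i-1}\|^4 \le O(\mu^4) \) for \( i \ge i_o \), as required.

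For the outer bound on \( \boldsymbol{s}_{i} \), I would again apply Jensen's inequality to obtain \( \|\boldsymbol{s}_{i}\|^4 \le \sum_{k=1}^{N} p_k \|\s_{k, i}(\w_{k, i-1})\|^4 \). Because the convex combination weights \( p_k \) are deterministic, conditioning on \( \boldsymbol{\mathcal{F}}_{i-1} \) and invoking~\eqref{eq:gradientnoise_fourth} termwise gives
\begin{equation}
\E\left\{\|\boldsymbol{s}_{i}\|^4 \big| \boldsymbol{\mathcal{F}}_{i-1}\right\} \le \sum_{k=1}^{N} p_k \sigma^4 = \sigma^4.
\end{equation}
No cross-terms appear because the fourth-moment bound is applied after Jensen, so the pairwise-uncorrelated condition~\eqref{eq:uncorrelated_noise} is not even required here. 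I do not anticipate a serious obstacle in this proof: everything reduces to Jensen plus the previously stated assumptions, with the only substantive input being Theorem~\ref{LEM:NETWORK_DISAGREEMENT_FOURTH} for the \( \boldsymbol{d}_{i-1} \) branch.
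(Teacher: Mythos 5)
Your proposal is correct and follows essentially the same route as the paper: Jensen's inequality to push the fourth power inside the convex combination, the Lipschitz condition plus the fourth-order network-disagreement bound of Theorem~\ref{LEM:NETWORK_DISAGREEMENT_FOURTH} for \( \boldsymbol{d}_{i-1} \), and the conditional fourth-moment bound~\eqref{eq:gradientnoise_fourth} termwise for \( \boldsymbol{s}_i \) (the paper likewise never invokes~\eqref{eq:uncorrelated_noise} here). The only cosmetic difference is that the paper routes the step \( \sum_k p_k\|\cdot\|^4 \le p_{\max}\sum_k \|\cdot\|^4 \le p_{\max}\bigl(\sum_k\|\cdot\|^2\bigr)^2 \) through the constant \( p_{\max} \), whereas you bound \( p_k \le 1 \) directly; both are valid.
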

\begin{IEEEproof}
	Appendix~\ref{ap:perturbation_bounds_fourth}.
\end{IEEEproof}

\begin{definition}[Sets]\label{DEF:SETS}
  To simplify the notation in the sequel, we introduce following sets:
  \begin{align}
    \mathcal{G} &\triangleq \left \{ w : {\left \| \nabla J(w) \right \|}^2 \ge \mu \frac{c_2}{c_1}\left(1+ \frac{1}{\pi}\right) \right \} \label{eq:define_g}\\
    \mathcal{G}^C &\triangleq \left \{ w : {\left \| \nabla J(w) \right \|}^2 < \mu \frac{c_2}{c_1} \left(1+\frac{1}{\pi} \right)\right \} \\
    \mathcal{H} &\triangleq \left \{ w : w \in \mathcal{G}^C, \lambda_{\min}\left( \nabla^2 J(w) \right) \le -\tau \right \} \label{eq:define_h}\\
    \mathcal{M} &\triangleq \left \{ w : w \in \mathcal{G}^C, \lambda_{\min}\left( \nabla^2 J(w) \right) > -\tau \right \} \label{eq:define_m}
  \end{align}
  where \( \tau \) is a small positive parameterm, \( c_1 \) and \( c_2 \) are constants:
  \begin{align}
		c_1 &\triangleq \frac{1}{2} \left( 1 - 2 \mu \delta \right) = O(1) \label{eq:define_c1}\\
		c_2 &\triangleq \delta \sigma^2 / 2 = O(1) \label{eq:define_c2}
	\end{align}
  and \( 0 < \pi < 1 \) is a parameter to be chosen. Note that \( \mathcal{G}^C = \mathcal{H} \cup \mathcal{M} \). We also define the probabilities \( \pi^{\mathcal{G}}_i \triangleq \mathrm{Pr}\left \{ \w_{c, i} \in \mathcal{G} \right \} \), \(\pi^{\mathcal{H}}_i \triangleq \mathrm{Pr}\left \{ \w_{c, i} \in \mathcal{H} \right \} \) and \(\pi^{\mathcal{M}}_i \triangleq \mathrm{Pr}\left \{ \w_{c, i} \in \mathcal{M} \right \}\). Then for all \( i \), we have \( \pi^{\mathcal{G}}_i + \pi^{\mathcal{H}}_i + \pi^{\mathcal{M}}_i = 1 \).\hfill\IEEEQED
\end{definition}
{The definitions~\eqref{eq:define_g}--\eqref{eq:define_m} decompose the parameter-space \( \mathds{R}^M\) into two disjoint sets \( \mathcal{G} \) and \( \mathcal{G}^C \), and further sub-divides \( \mathcal{G}^C \) into \( \mathcal{H} \) and \( \mathcal{M} \). The set \( \mathcal{G} \) denotes the set all points \( w \) where the norm of the gradient is large, while \( \mathcal{G}^C = \mathcal{H} \cup \mathcal{M} \) denotes the set of all points where the norm of the gradient is small, i.e., approximately first-order stationary points. In a manner similar to related works on the escape from strict-saddle points, we further decompose the set \( \mathcal{G}^C \) of approximate first-order stationary points into those points \( w \in \mathcal{H} \) that do have a significant negative eigenvalue, and those in \( \mathcal{M} \) that do not~\cite{Ge15, Jin17}. Points in the parameter space that have a small gradient norm and \emph{no} significant negative eigenvalue are referred to as \emph{second-order} stationary points, while points in \( \mathcal{H} \) are known as \emph{strict} saddle-points due to the presence of a strictly negative eigenvalue in the Hessian matrix.
In the sequel, we will establish descent for centroids in \( \mathcal{G} \) in Theorem~\ref{TH:DESCENT_RELATION} and centroids in \( \mathcal{H} \) in Part II~\cite[Theorem 1]{Vlaski19nonconvexP2}, and hence the approach of a point in \( \mathcal{M} \) with high probability after a polynomial number of iterations in Part II~\cite[Theorem 2]{Vlaski19nonconvexP2}. Second-order stationary points are generally more likely to be ``good'' minimizers than first-order stationary points, which could even correspond to local maxima. Furthermore, for a certain class of cost functions, known as ``strict-saddle'' functions, second-order stationary points always correspond to local minimia for sufficiently small \( \tau \)~\cite{Ge15}.

\subsection{Evolution of the network centroid}
Having established in~\eqref{eq:network_disagreement}, that after sufficient iterations, all agents in the network will have contracted around the centroid in a small cluster for small step-sizes, we can now leverage \( \w_{c, i} \) as a proxy for all \( \w_{k, i} \). From Assumption~\ref{as:lipschitz} and~\eqref{eq:quadratic_upper}, we have the following bound:
\begin{align}
	J(\w_{c, i}) \le&\: J(\w_{c, i-1}) + {\nabla J(\w_{c, i-1})}^{\T} \left( \w_{c, i} - \w_{c, i-1} \right) \notag \\
	&\:+ \frac{\delta}{2} {\left \| \w_{c, i} - \w_{c, i-1} \right \|}^2
\end{align}
From~\eqref{eq:perturbed_gradient_descent}, we then obtain:
\begin{align}\label{eq:lipschitz_before_exp}
	J(\w_{c, i}) \le&\: J(\w_{c, i-1}) - \mu {\left \| {\nabla J(\w_{c, i-1})} \right \|}^2 \notag \\
	&\:- \mu {\nabla J(\w_{c, i-1})}^{\T} \left( \boldsymbol{d}_{i-1} + \s_i \right) \notag \\
	&\:+ \mu^2 \frac{\delta}{2} {\left \| {\nabla J(\w_{c, i-1})} + \boldsymbol{d}_{i-1} + \s_i \right \|}^2
\end{align}
This relation, along with~\eqref{eq:perturbed_gradient_descent} and the results from Lemma~\ref{LEM:PERTURBATION_BOUNDS_FOURTH}, allow us to establish the following theorem.
\begin{theorem}[\textbf{Descent relation}]\label{TH:DESCENT_RELATION}
	Beginning at \( \w_{c, i-1} \) in the large gradient regime \( \mathcal{G} \), we can bound:
  \begin{align}\label{eq:descent_in_g}
    &\:\E \left \{ J(\w_{c, i}) | \w_{c, i-1} \in \mathcal{G} \right \} \notag \\
    \le&\: \E \left \{ J(\w_{c, i-1}) | \w_{c, i-1} \in \mathcal{G} \right \} - \mu^2 \frac{c_2}{\pi} + \frac{O(\mu^3)}{\pi_{i-1}^{\mathcal{G}}}
  \end{align}
	{as long as \( \pi_{i-1}^{\mathcal{G}} = \mathrm{Pr}\left \{ \w_{c, i-1} \in \mathcal{G} \right \} \neq 0 \)} where the relevant constants are listed in definition~\ref{DEF:SETS}. On the other hand, beginning at \( \w_{c, i-1} \in \mathcal{M} \), we can bound:
  \begin{align}\label{eq:ascent_bound}
    &\:\E \left \{ J(\w_{c, i}) | \w_{c, i-1} \in \mathcal{M} \right \} \notag \\
    \le&\: \E \left \{ J(\w_{c, i-1}) | \w_{c, i-1} \in \mathcal{M} \right \} + \mu^2 {c_2} + \frac{O(\mu^3)}{\pi_{i-1}^{\mathcal{M}}}
  \end{align}
  {as long as \( \pi_{i-1}^{\mathcal{M}} = \mathrm{Pr}\left \{ \w_{c, i-1} \in \mathcal{M} \right \} \neq 0 \)}.
\end{theorem}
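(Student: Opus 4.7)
The plan is to derive both \eqref{eq:descent_in_g} and \eqref{eq:ascent_bound} from the deterministic upper bound \eqref{eq:lipschitz_before_exp} through a two-stage conditioning argument. First I would take conditional expectation with respect to \( \boldsymbol{\mathcal{F}}_{i-1} \), relative to which \( \w_{c,i-1} \) and \( \boldsymbol{d}_{i-1} \) are deterministic and only \( \s_i \) is random. The unbiasedness \eqref{eq:conditional_zero_mean} kills the linear noise term as well as the noise cross term obtained after expanding the square, while the pairwise uncorrelatedness \eqref{eq:uncorrelated_noise} combined with \eqref{eq:gradientnoise} gives \( \E\{\|\s_i\|^2 | \boldsymbol{\mathcal{F}}_{i-1}\} \le \sigma^2 \sum_k p_k^2 \le \sigma^2 \). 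Applying Young's inequality \( -\mu \nabla J^{\T} \boldsymbol{d}_{i-1} \le \tfrac{\mu}{2} \|\nabla J\|^2 + \tfrac{\mu}{2} \|\boldsymbol{d}_{i-1}\|^2 \) and the elementary bound \( \|\nabla J + \boldsymbol{d}_{i-1}\|^2 \le 2\|\nabla J\|^2 + 2\|\boldsymbol{d}_{i-1}\|^2 \), the coefficient of \( \|\nabla J(\w_{c,i-1})\|^2 \) collapses to \( -\mu + \mu/2 + \mu^2 \delta = -\mu c_1 \), matching \eqref{eq:define_c1}, while \( \|\boldsymbol{d}_{i-1}\|^2 \) inherits a coefficient of order \( \mu \) and the constant term \( \mu^2 \delta \sigma^2 / 2 = \mu^2 c_2 \) appears. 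The outcome is the filtration-conditional bound
\begin{equation*}
  \E\{J(\w_{c,i}) | \boldsymbol{\mathcal{F}}_{i-1}\} \le J(\w_{c,i-1}) - \mu c_1 \|\nabla J(\w_{c,i-1})\|^2 + O(\mu) \|\boldsymbol{d}_{i-1}\|^2 + \mu^2 c_2.
\end{equation*}

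In the second stage I would swap the filtration conditioning for conditioning on the event \( \{\w_{c,i-1} \in \mathcal{G}\} \) (and separately on \( \{\w_{c,i-1} \in \mathcal{M}\} \)). Since \( \w_{c,i-1} \) is \( \boldsymbol{\mathcal{F}}_{i-1} \)-measurable and both sets are deterministic, Lemma~\ref{LEM:CONDITIONING} legitimises the swap on every term. The delicate piece is the non-negative residual \( O(\mu) \E\{\|\boldsymbol{d}_{i-1}\|^2 | \w_{c,i-1} \in \mathcal{G}\} \), which I would control via the inflation identity \( \E\{X | A\} \le \E\{X\}/\mathrm{Pr}(A) \) for \( X \ge 0 \), combined with the unconditional estimate \( \E \|\boldsymbol{d}_{i-1}\|^2 \le O(\mu^2) \) delivered by Lemma~\ref{LEM:PERTURBATION_BOUNDS_FOURTH}; this produces precisely the advertised overhead \( O(\mu^3) / \pi^{\mathcal{G}}_{i-1} \), and an analogous overhead on the \( \mathcal{M} \)-branch. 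The constant term \( \mu^2 c_2 \) is unaffected by the conditioning, and the gradient term becomes \( -\mu c_1 \E\{\|\nabla J(\w_{c,i-1})\|^2 | \w_{c,i-1} \in \mathcal{G}\} \).

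Finally I would plug in the set definitions. For \( \w_{c,i-1} \in \mathcal{G} \), the defining lower bound \( \|\nabla J\|^2 \ge \mu(c_2/c_1)(1 + 1/\pi) \) upper bounds \( -\mu c_1 \|\nabla J\|^2 \) by \( -\mu^2 c_2 (1 + 1/\pi) \), which after adding \( +\mu^2 c_2 \) telescopes to the net per-step descent \( -\mu^2 c_2 / \pi \), yielding \eqref{eq:descent_in_g}. For \( \w_{c,i-1} \in \mathcal{M} \) no gradient lower bound is available, so I would simply discard the non-positive term \( -\mu c_1 \|\nabla J\|^2 \), retaining \( +\mu^2 c_2 \) together with the \( O(\mu^3) / \pi^{\mathcal{M}}_{i-1} \) overhead, producing \eqref{eq:ascent_bound}. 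The main technical obstacle will be the conditioning swap of the second stage: one must check that Lemma~\ref{LEM:CONDITIONING} applies (which it does, because \( \w_{c,i-1} \) is a deterministic function of \( \bcw_{i-1} \)) and confirm that the conditional expectation of \( \|\boldsymbol{d}_{i-1}\|^2 \) picks up only the multiplicative factor \( 1/\pi^{\mathcal{G}}_{i-1} \) rather than some worse dependence arising from correlations between \( \boldsymbol{d}_{i-1} \) and the event \( \{\w_{c,i-1} \in \mathcal{G}\} \).
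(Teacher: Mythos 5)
Your proposal is correct and follows essentially the same route as the paper: conditioning on \( \boldsymbol{\mathcal{F}}_{i-1} \) to eliminate the noise cross-terms, Young-type inequalities to collapse the gradient coefficient to \( -\mu c_1 \) and the constant to \( \mu^2 c_2 \), Lemma~\ref{LEM:CONDITIONING} to swap to event conditioning, and the bound \( \E\{\|\boldsymbol{d}_{i-1}\|^2 \,|\, \w_{c,i-1}\in\mathcal{G}\} \le O(\mu^2)/\pi_{i-1}^{\mathcal{G}} \) before invoking the definitions of \( \mathcal{G} \) and \( \mathcal{M} \). The only cosmetic difference is that you justify \( \E\{\|\s_i\|^2 | \boldsymbol{\mathcal{F}}_{i-1}\} \le \sigma^2 \) via the uncorrelatedness condition~\eqref{eq:uncorrelated_noise}, whereas the paper gets the same bound from Jensen's inequality in Lemma~\ref{LEM:PERTURBATION_BOUNDS_FOURTH}; both are valid.
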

\begin{IEEEproof}
	Appendix~\ref{ap:descent_relation}.
\end{IEEEproof}
{Relation~\eqref{eq:descent_in_g} guarantees a lower bound on the expected improvement when the gradient norm at the current iterate is suffiently large, i.e. \( \w_{c, i-1} \in \mathcal{G} \) is not an approximately first-order stationary point. On the other hand, when \( \w_{c, i-1} \in \mathcal{M} \), inequality~\eqref{eq:ascent_bound} it establishes an upper bound on the expected ascent. The respective bounds can be balanced by appropriately choosing \( \pi \), which will be leveraged in Part II~\cite{Vlaski19nonconvexP2}. We are left to treat the third possibility, namely \( \w_{c, i-1} \in \mathcal{H} \). In this case, since the norm of the gradient is small, it is no longer possible to guarantee descent in a single iteration. We shall study the dynamics in more detail in the sequel.}

\subsection{Behavior around stationary points}
In the vicinity of saddle-points, the norm of the gradient is not sufficiently large to guarantee descent at every iteration as indicated by~\eqref{eq:descent_in_g}. Instead, we will study the cumulative effect of the gradient, as well as perturbations, over several iterations. For this purpose, we introduce the following second-order condition on the cost functions, which is common in the literature~\cite{Sayed14, Ge15, Jin17}.
\begin{assumption}[\textbf{Lipschitz Hessians}]\label{as:lipschitz_hessians}
  Each \( J_k(\cdot) \) is twice-differentiable with Hessian \( \nabla^2 J_k(\cdot) \) and, there exists \( \rho \ge 0 \) such that:
  \begin{equation}
    {\| \nabla^2 J_k(x) - \nabla^2 J_k(y) \|} \le \rho \|x - y\|
  \end{equation}
  By Jensen's inequality, this implies that \( J(\cdot) = \sum_{k=1}^N p_k J_k(\cdot) \) also satisfies:
  \begin{equation}\label{eq:lipschitz_hessians}
    {\| \nabla^2 J(x) - \nabla^2 J(y) \|} \le \rho \|x - y\|
  \end{equation}\hfill\IEEEQED
\end{assumption}

{Let \( i^{\star} \) denote an arbitraty point in time. We use \( i^{\star} \) }{in order to emphasize approximately first-order stationary points, where the norm of the gradient is small. Such first-order stationary points \( \w_{c, i^{\star}} \in \mathcal{G}^{C} \) could either be in the set of second-order stationary points \( \mathcal{M} \) or in the set of strict-saddle points \( \mathcal{H} \). Our objective is to show that when \( \w_{c, i^{\star}} \in \mathcal{H} \), we can guarantee descent after several iterations. To this end,} starting at \( i^{\star} \), we have for \( i \ge 0 \):
\begin{equation}\label{eq:stationary_point}
  \w_{c, i^{\star}+i+1} = \w_{c, i^{\star}+i} - \mu {\nabla J} (\w_{c, i^{\star}+i}) - \mu \boldsymbol{d}_{i^{\star}+i} - \mu \s_{i^{\star}+i+1}
\end{equation}

Subsequent analysis will rely on an auxilliary model, referred to as a short-term model. It will be seen that this model is more tractable and evolves ``close'' to the true recursion under the second-order smoothness condition on the Hessian matrix~\eqref{eq:lipschitz_hessians} and as long as the iterates remain close to a stationary point. A similar approach has been introduced and used to great advantage in the form of a ``long-term model'' to derive accurate mean-square deviation performance expressions for strongly-convex costs in~\cite{Sayed14, Sayed14proc, Chen15performance, Zhao15}. The approach was also used to provide a ``quadratic approximation'' to establish the ability of stochastic gradient based algorithms to escape from strict saddle-points in the single-agent case under i.i.d. perturbations in~\cite{Ge15}.

For the driving gradient term in~\eqref{eq:stationary_point}, we have from the mean-value theorem~\cite{Sayed14}:
\begin{equation}
  {\nabla J} (\w_{c, i^{\star}+i}) - {\nabla J} (\w_{c, i^{\star}}) = \boldsymbol{H}_{i^{\star}+i} \left(\w_{c, i^{\star}+i} - \w_{c, i^{\star}} \right)
\end{equation}
where
\begin{equation}
  \boldsymbol{H}_{i^{\star}+i} \triangleq \int_0^1 \nabla^2 J\left( (1-t) \w_{c, i^{\star}+i} + t \w_{c, i^{\star}} \right) dt
\end{equation}
Subtracting~\eqref{eq:stationary_point} from \( \w_{c, i^{\star}} \), we obtain:
\begin{align}
  &\w_{c, i^{\star}} - \w_{c, i^{\star}+i+1} \notag \\
  =& \w_{c, i^{\star}} - \w_{c, i^{\star}+i} + \mu {\nabla J} (\w_{c, i^{\star}+i}) + \mu \boldsymbol{d}_{i^{\star}+i} + \mu \s_{i^{\star}+i+1} \notag \\
  =& \left( I - \mu \boldsymbol{H}_{i^{\star} + i} \right) \left( \w_{c, i^{\star}} - \w_{c, i^{\star}+i} \right) + \mu {\nabla J} (\w_{c, i^{\star}}) \notag \\
  &+ \mu \boldsymbol{d}_{i^{\star}+i} + \mu \s_{i^{\star}+i+1}
\end{align}
We introduce short-hand notation for the deviation:
\begin{equation}
  \widetilde{\w}_{i}^{i^{\star}} \triangleq \w_{c, i^{\star}} - \w_{c, i^{\star}+i}
\end{equation}
{Note that \( \widetilde{\w}_{i}^{i^{\star}} \) denotes the deviation of the network centroid \( \w_{c, i^{\star}+i} \) at time \( i^{\star}+i \) from the initial, approximately first-order stationary point \( \w_{c, i^{\star}} \). Establishing escape from saddle-points is equivalent to establishing the growth of \( \widetilde{\w}_{i}^{i^{\star}} \) whenever \( \w_{c, i^{\star}} \in \mathcal{H}\). We hence expect the deviation to grow over time, but would like to establish that \( \w_{c, i^{\star}+i}\) moves away from \( \w_{c, i^{\star}} \) in a direction of descent.} We can then write more compactly:
\begin{align}
  \widetilde{\w}_{i+1}^{i^{\star}} = &\: \left( I - \mu \boldsymbol{H}_{i^{\star} + i} \right) \widetilde{\w}_{i}^{i^{\star}} + \mu {\nabla J} (\w_{c, i^{\star}}) \notag \\
  &\: + \mu \boldsymbol{d}_{i^{\star}+i} + \mu \s_{i^{\star}+i+1} \label{eq:error_recursion}
\end{align}
The time-varying nature of \( \boldsymbol{H}_{i^{\star}+i} \) makes this recursion difficult to study. We hence introduce the following auxilliary recursion, initialized at \( \w_{c, i^{\star}}' = \w_{c, i^{\star}} \), where \( \boldsymbol{H}_{i^{\star}+i} \) is replaced by \( \nabla^2 J(\w_{c, i^{\star}}) \) and the perturbation term \( \mu \boldsymbol{d}_{i^{\star}+i} \) is omitted:
\begin{align}
  \w_{c, i^{\star}} - \w_{c, i^{\star}+i+1}' =&\: \left( I - \mu \nabla^2 J( \w_{c, i^{\star}}) \right) \left( \w_{c, i^{\star}} - \w_{c, i^{\star}+i}' \right) \notag \\
  &\: + \mu \nabla J(\w_{c, i^{\star}}) + \mu \s_{i^{\star}+i+1}
\end{align}
or, more compactly, with \( \widetilde{\w}_{i}'{}^{i^{\star}} \triangleq \w_{c, i^{\star}} - \w_{c, i^{\star}+i}' \)
\begin{align}
  \widetilde{\w}'{}^{i^{\star}}_{i+1} =&\: \left( I - \mu \nabla^2 J( \w_{c, i^{\star}}) \right) \widetilde{\w}_{i}'{}^{i^{\star}} + \mu \nabla J(\w_{c, i^{\star}}) + \mu \s_{i^{\star}+i+1} \label{eq:long_term_recursive}
\end{align}
Of course, this second model is only useful in studying the behavior of the original recursion~\eqref{eq:stationary_point} if the iterates generated by both models remain close to each other, which we shall prove to be true. \ifarx Specifically, if we write:
\begin{equation}
  {\w}_{i^{\star}+i+1}' = {\w}_{i^{\star}+i+1} + \boldsymbol{u}_{i^{\star}+i+1}
\end{equation}
then \( \boldsymbol{u}_{i^{\star}+i+1} \) will be shown to be negligible in some sense. \fi Results along this line have been established in the centralized and distributed contexts for strongly-convex costs~\cite{Sayed14, Sayed14proc} and in the centralized setting for strict saddle points~\cite{Ge15}. We show here that this conclusion holds more generally in the vicinity of \( O(\mu) \)-first-order stationary points. Before establishing deviation bounds, we establish a short lemma which will be used repeatedly.
\begin{lemma}[A limiting result]\label{LEM:LIMITING_RESULTS}
  For \( T, \mu, \delta > 0 \) and \( k \in \mathds{Z}_+ \) with \( \mu < \frac{1}{\delta} \), we have:
  \begin{align}
    \lim_{\mu \to 0} {\left( \frac{{(1+\mu \delta)}^k}{{\left(1-{\mu \delta}\right)}^{k-1}} \right)}^{\frac{T}{\mu}} = e^{-T \delta + 2k T \delta} = O(1)
  \end{align}
\end{lemma}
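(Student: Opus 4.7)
The plan is to reduce the limit to a standard exponential limit by taking logarithms and expanding in powers of \( \mu \). Since the expression is a positive number raised to the power \( T/\mu \) (the positivity follows from the hypothesis \( \mu < 1/\delta \), which ensures \( 1 - \mu\delta > 0 \)), I would first write
\[
L(\mu) \;\triangleq\; \log \left( \frac{(1+\mu\delta)^k}{(1-\mu\delta)^{k-1}} \right)^{T/\mu} \;=\; \frac{T}{\mu} \Big[ k \log(1+\mu\delta) \,-\, (k-1)\log(1-\mu\delta) \Big],
\]
so that the original limit equals \( \exp(\lim_{\mu \to 0} L(\mu)) \) by continuity of the exponential.

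Next, I would apply the standard Taylor expansions \( \log(1+x) = x - x^2/2 + O(x^3) \) and \( \log(1-x) = -x - x^2/2 + O(x^3) \), valid since \( \mu\delta \to 0 \). Substituting yields
\[
k \log(1+\mu\delta) - (k-1)\log(1-\mu\delta) \;=\; (2k-1)\,\mu\delta \;-\; \tfrac{1}{2}\,\mu^2\delta^2 \;+\; O(\mu^3).
\]
Multiplying by \( T/\mu \) gives \( L(\mu) = (2k-1)T\delta + O(\mu) \), so \( \lim_{\mu \to 0} L(\mu) = (2k-1)T\delta = -T\delta + 2kT\delta \). Exponentiating yields the claimed value \( e^{-T\delta + 2kT\delta} \), which is a finite constant depending only on the fixed quantities \( T, \delta, k \), hence \( O(1) \).

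There is no real obstacle here: once the logarithm is taken and the Taylor expansions are substituted, the result follows from standard asymptotic arithmetic. The only subtle point worth noting is that the hypothesis \( \mu < 1/\delta \) is needed to guarantee that the base is positive (so that the logarithm is defined) and that the remainder terms in the Taylor expansion of \( \log(1-\mu\delta) \) are controlled uniformly as \( \mu \to 0 \); since we are taking a limit as \( \mu \to 0 \), both conditions are automatically satisfied for all sufficiently small \( \mu \), and the result follows.
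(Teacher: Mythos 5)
Your proposal is correct and follows essentially the same route as the paper: take logarithms, use continuity of the exponential/logarithm to reduce the problem to the limit of \( \frac{T}{\mu}\left[ k\log(1+\mu\delta) - (k-1)\log(1-\mu\delta) \right] \), and evaluate it to obtain \( (2k-1)T\delta \). The only difference is that you evaluate the elementary limits \( \lim_{\mu\to 0}\log(1\pm\mu\delta)/\mu = \pm\delta \) via Taylor expansion, whereas the paper uses L'H\^{o}pital's rule; this is an immaterial variation.
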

\begin{IEEEproof}
  Appendix~\ref{AP:LIMITING_RESULTS}.
\end{IEEEproof}

\begin{lemma}[Deviation bounds]\label{LEM:DEVIATION_BOUNDS}
  {Suppose \( \mathrm{Pr}\left\{ \w_{c, i^{\star}} \in \mathcal{H} \right \} \neq 0 \).} Then, the following quantities are conditionally bounded:
  \begin{align}
    \E \left \{ {\left \| \widetilde{\w}_{i}^{i^{\star}} \right \|}^2 | \w_{c, i^{\star}} \in \mathcal{H} \right \} &\le O(\mu) + \frac{O(\mu^2)}{\pi_{i^{\star}}^{\mathcal{H}}} \label{eq:ms_stability}\\
    \E \left \{ {\left \| \widetilde{\w}_{i}^{i^{\star}} \right \|}^3 | \w_{c, i^{\star}} \in \mathcal{H} \right \} &\le O(\mu^{3/2}) + \frac{O(\mu^3)}{{\pi_{i^{\star}}^{\mathcal{H}}}} \label{eq:mt_stability}\\
    \E \left \{ {\left \| \widetilde{\w}_{i}^{i^{\star}} \right \|}^4 | \w_{c, i^{\star}} \in \mathcal{H} \right \} &\le O(\mu^{2}) + \frac{O(\mu^{4})}{\pi_{i^{\star}}^{\mathcal{H}}} \label{eq:mf_stability}\\
    \E \left \{ {\left \| \widetilde{\w}_{i}^{i^{\star}} - \widetilde{\w}_{i}'{}^{i^{\star}} \right \|}^2 | \w_{c, i^{\star}} \in \mathcal{H} \right \} &\le O(\mu^{2}) + \frac{O(\mu^{2})}{\pi_{i^{\star}}^{\mathcal{H}}} \label{eq:model_deviation}\\
    \E \left \{ {\left \| \widetilde{\w}_{i}'{}^{i^{\star}} \right \|}^2 | \w_{c, i^{\star}} \in \mathcal{H} \right \} &\le O(\mu) + \frac{O(\mu^2)}{\pi_{i^{\star}}^{\mathcal{H}}} \label{eq:longterm_deviation}
  \end{align}
  for \( i \le \frac{T}{\mu} \), where \( T \) denotes an arbitrary constant that is independent of the step-size \( \mu \).
\end{lemma}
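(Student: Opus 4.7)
The plan is to iterate the error recursion~\eqref{eq:error_recursion} and its short-term counterpart~\eqref{eq:long_term_recursive}, both launched from \( \widetilde{\w}_0^{i^\star} = 0 \), using the Jensen-type inequality \( \|a+b\|^2 \le (1+\alpha)\|a\|^2 + (1+1/\alpha)\|b\|^2 \) (and its 4th-order analogue employed in~\eqref{eq:disagreement_inequality}) with \( \alpha = \mu \), combined with the operator-norm bound \( \|I - \mu \boldsymbol{H}_{i^\star+i}\| \le 1 + \mu\delta \) inherited from~\eqref{eq:hessian_bound}. Because \( i \le T/\mu \), any geometric factor of the form \( (1+O(\mu))^{T/\mu} \) remains \( O(1) \) by Lemma~\ref{LEM:LIMITING_RESULTS}, so the total deviation is essentially \( T/\mu \) times the per-iteration additive driving term. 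Unconditional bounds are converted into conditional ones via Lemma~\ref{LEM:CONDITIONING}, which yields \( \E[X|\w_{c,i^\star}\in\mathcal{H}] \le \E[X]/\pi^{\mathcal{H}}_{i^\star} \) for any \( X \ge 0 \).

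For the second-moment bound~\eqref{eq:ms_stability}, I would square~\eqref{eq:error_recursion}, condition on \( \boldsymbol{\mathcal{F}}_{i^\star+i} \), and use~\eqref{eq:conditional_zero_mean} to kill the cross-terms in \( \s_{i^\star+i+1} \), whose square contributes \( \mu^2\sigma^2 \) via~\eqref{eq:gradientnoise}. Young's inequality with \( \alpha = \mu \) then produces a coefficient \( 1+O(\mu) \) on \( \|\widetilde{\w}_i^{i^\star}\|^2 \) and a coefficient \( O(\mu) \) on \( \|\nabla J(\w_{c,i^\star})\|^2 + \|\boldsymbol{d}_{i^\star+i}\|^2 \). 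By definition~\eqref{eq:define_h}, \( \|\nabla J(\w_{c,i^\star})\|^2 \le O(\mu) \) on \( \mathcal{H} \), while Lemma~\ref{LEM:PERTURBATION_BOUNDS_FOURTH} yields \( \E\|\boldsymbol{d}_{i^\star+i}\|^2 \le O(\mu^2) \), hence \( O(\mu^2)/\pi^{\mathcal{H}}_{i^\star} \) conditionally. The per-step driving term is thus \( \mu^2\sigma^2 + O(\mu^2) + O(\mu^3)/\pi^{\mathcal{H}}_{i^\star} \); summing \( T/\mu \) of them produces~\eqref{eq:ms_stability}. The same recipe with the 4th-order split used in~\eqref{eq:disagreement_inequality} and the 4th-moment conditions in~\eqref{eq:gradientnoise_fourth} and Lemma~\ref{LEM:PERTURBATION_BOUNDS_FOURTH} delivers~\eqref{eq:mf_stability}, and~\eqref{eq:mt_stability} then follows at once from Cauchy--Schwarz, \( \E\|\widetilde{\w}_i^{i^\star}\|^3 \le \sqrt{\E\|\widetilde{\w}_i^{i^\star}\|^2\cdot\E\|\widetilde{\w}_i^{i^\star}\|^4} \).

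For~\eqref{eq:model_deviation}, I would subtract~\eqref{eq:long_term_recursive} from~\eqref{eq:error_recursion} to obtain
\[
  \widetilde{\w}_{i+1}^{i^\star} - \widetilde{\w}_{i+1}'{}^{i^\star} = \left(I - \mu\boldsymbol{H}_{i^\star+i}\right)\left(\widetilde{\w}_i^{i^\star} - \widetilde{\w}_i'{}^{i^\star}\right) - \mu\left(\boldsymbol{H}_{i^\star+i} - \nabla^2 J(\w_{c,i^\star})\right)\widetilde{\w}_i'{}^{i^\star} + \mu\, \boldsymbol{d}_{i^\star+i}.
\]
Assumption~\ref{as:lipschitz_hessians} applied inside the integral defining \( \boldsymbol{H}_{i^\star+i} \) gives \( \|\boldsymbol{H}_{i^\star+i} - \nabla^2 J(\w_{c,i^\star})\| \le (\rho/2)\|\widetilde{\w}_i^{i^\star}\| \), so the new cross term is bounded by \( (\rho^2/4)\|\widetilde{\w}_i^{i^\star}\|^2\|\widetilde{\w}_i'{}^{i^\star}\|^2 \), whose expectation is controlled by Cauchy--Schwarz together with~\eqref{eq:mf_stability} (applied to both the true process and its short-term proxy). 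The resulting per-step driving is of order \( \mu^2\cdot(O(\mu^2) + O(\mu^2)/\pi^{\mathcal{H}}_{i^\star}) \); with the \( 1/\mu \) amplification from Young's inequality and \( T/\mu \) iterations, the total is \( O(\mu^2) + O(\mu^2)/\pi^{\mathcal{H}}_{i^\star} \), which is~\eqref{eq:model_deviation}. Finally,~\eqref{eq:longterm_deviation} follows from \( \|\widetilde{\w}_i'{}^{i^\star}\|^2 \le 2\|\widetilde{\w}_i^{i^\star}\|^2 + 2\|\widetilde{\w}_i^{i^\star} - \widetilde{\w}_i'{}^{i^\star}\|^2 \) combined with~\eqref{eq:ms_stability} and~\eqref{eq:model_deviation}.

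The main obstacle is the order in which the five bounds have to be proved: the model-deviation estimate requires 4th-moment control of both \( \widetilde{\w}_i^{i^\star} \) and \( \widetilde{\w}_i'{}^{i^\star} \), so~\eqref{eq:mf_stability} and its primed analogue must be handled first. Extracting the sharper \( O(\mu^2) \) scaling in~\eqref{eq:model_deviation} (rather than the \( O(\mu) \) that a direct estimate would produce) is what genuinely uses the Lipschitz-Hessian hypothesis~\eqref{eq:lipschitz_hessians} and is what makes the short-term model a faithful proxy for the true dynamics in the neighbourhood of a saddle point.
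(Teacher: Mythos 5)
Your overall architecture is the paper's: iterate the error recursion~\eqref{eq:error_recursion} from \( \widetilde{\w}_0^{i^\star}=0 \), remove the noise cross-terms via~\eqref{eq:conditional_zero_mean}, apply a Young/Jensen split with contraction factor \( 1+O(\mu) \), use \( \|\nabla J(\w_{c,i^\star})\|^2 \le O(\mu) \) on \( \mathcal{H} \) and Lemma~\ref{LEM:PERTURBATION_BOUNDS_FOURTH} together with the division by \( \pi_{i^\star}^{\mathcal{H}} \) for the \( \boldsymbol{d} \)-terms, and control the geometric sum (whose amplification is \( O(1/\mu) \)) by Lemma~\ref{LEM:LIMITING_RESULTS}; your second-moment argument reproduces the paper's proof of~\eqref{eq:ms_stability}. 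The genuine gap is the fourth-moment step. "The same recipe with the 4th-order split used in~\eqref{eq:disagreement_inequality}" means \( \|a+b\|^4 \le \alpha^{-3}\|a\|^4+(1-\alpha)^{-3}\|b\|^4 \); to keep the coefficient on \( \|\widetilde{\w}_i^{i^\star}\|^4 \) at \( 1+O(\mu) \) you are forced to take \( \alpha = 1-O(\mu) \), so whatever sits in the second slot is multiplied by \( O(\mu^{-3}) \). If the gradient noise sits there, its per-iteration contribution is \( \mu^4\sigma^4\cdot O(\mu^{-3}) = O(\mu)\sigma^4 \), and after the \( O(1/\mu) \) geometric amplification you obtain only \( \E\{\|\widetilde{\w}_i^{i^\star}\|^4\,|\,\w_{c,i^\star}\in\mathcal{H}\} \le O(1) \), not the \( O(\mu^2)+O(\mu^4)/\pi_{i^\star}^{\mathcal{H}} \) claimed in~\eqref{eq:mf_stability}; killing the cross-terms that are \emph{linear} in \( \s \) does not help, because the quadratic term \( \|a\|^2\|\s\|^2 \) survives. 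The paper avoids this with the refined expansion \( \|a+b\|^4 \le \|a\|^4+3\|b\|^4+8\|a\|^2\|b\|^2+4\|a\|^2(a^{\T}b) \) applied with \( b = \mu\s_{i^\star+i+1} \), so the noise enters only through \( 3\mu^4\E\|\s\|^4 = O(\mu^4) \) and \( 8\mu^2\E\|\s\|^2\|a\|^2 = O(\mu^3) \) per step, while the blunt two-term split is reserved for separating \( (I-\mu\boldsymbol{H}_{i^\star+i})\widetilde{\w}_i^{i^\star} \) from \( \mu(\nabla J(\w_{c,i^\star})+\boldsymbol{d}_{i^\star+i}) \). Without this (or an equivalent careful expansion), \eqref{eq:mf_stability} fails at the stated order, and with it~\eqref{eq:mt_stability} and the \( O(\mu^2) \) scaling of~\eqref{eq:model_deviation}, which consumes \( \E\|\widetilde{\w}_i^{i^\star}\|^4 \) through the Hessian-mismatch term.

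A secondary point concerns your difference recursion for~\eqref{eq:model_deviation}: you place the mismatch \( \boldsymbol{H}_{i^\star+i}-\nabla^2 J(\w_{c,i^\star}) \), bounded by \( (\rho/2)\|\widetilde{\w}_i^{i^\star}\| \), against the \emph{short-term} deviation \( \widetilde{\w}_i'{}^{i^\star} \), which then requires fourth-moment control of the primed process; \eqref{eq:mf_stability} does not cover it, and your claim that it applies "to both" is an unproven ingredient (easily supplied from~\eqref{eq:long_term_recursive}, but missing as written). The paper instead factors the recursion with \( I-\mu\nabla^2 J(\w_{c,i^\star}) \) acting on the difference and the mismatch acting on the true deviation \( \widetilde{\w}_i^{i^\star} \), so that only~\eqref{eq:mf_stability} is needed; adopting that factorization removes the extra step. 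Your remaining variations are harmless: Cauchy--Schwarz between the second and fourth moments in place of the paper's \( \E\|x\|^3 \le (\E\|x\|^4)^{3/4} \) for~\eqref{eq:mt_stability}, and the triangle inequality combining~\eqref{eq:ms_stability} with~\eqref{eq:model_deviation} for~\eqref{eq:longterm_deviation}.
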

\begin{IEEEproof}
  Appendix~\ref{AP:DEVIATION_BOUNDS}.
\end{IEEEproof}
{These deviation bounds establish that, beginning at a strict-saddle point \( \w_{c, i^{\star}} \) at time \( i^{\star} \) the iterates will remain close to \( \w_{c, i^{\star}} \) for the next \( O(1/\mu) \) iterations. Consequently, the stort-term model will be sufficiently accurate for the next \( O(1/\mu) \) iterations. We will establish formally in Part II~\cite{Vlaski19nonconvexP2} that the small-deviation bounds in Lemma~\ref{LEM:DEVIATION_BOUNDS} ensure descent of the true recursion can be inferred by studying only the evolution of the short-term model, which is significantly more tractable.}

\section{Application: Robust Regression}
\noindent Consider a scenario where each agent \( k \) in the network observes streaming realizations \( \left \{ \boldsymbol{\gamma}(k, i), \boldsymbol{h}_{k,i} \right \} \) from the linear model \( \boldsymbol{\gamma}(k) = \boldsymbol{h}_k^{\mathsf{T}} w^o + \boldsymbol{v}(k) \) where \( \boldsymbol{\gamma}(k) \) denotes scalar observations and \( \boldsymbol{v}(k) \) denotes measurement noise. One common approach for estimating \( w^o \) in a distributed setting is via least-mean-square error estimation, resulting in the local cost functions:
\begin{equation}
	J_k^{\mathrm{MSE}}(w) = \E {\left \|\boldsymbol{\gamma}(k) -  \boldsymbol{h}_k^{\mathsf{T}} w \right \|}^2
\end{equation}
The resulting problem is convex and has been studied extensively in the literature. While effective under the assumption of Gaussian noise, and similar well-behaved noise conditions, this approach is susceptible to outliers caused by heavy-tailed distributions for \( \boldsymbol{v}(k) \)~\cite{Zoubir18}.
This is caused by the fact that the quadratic risk penalizes errors proportionally to their squared norm, and as such has a tendency to over-correct outliers, even if they are rare. Several alternative robust cost functions have been suggested in the literature. We consider two in particular in order to illustrate the advantages of allowing for non-convex costs in the context of robust estimation, namely the Huber loss \( Q_k^{\mathrm{H}}(w; \x_k) \) and Tukey's biweight loss \( Q_k^{\mathrm{B}}(w; \x_k) \)~\cite{Zoubir18}. For ease of notation, let \( \boldsymbol{e}(w) \triangleq \boldsymbol{\gamma}(k) -  \boldsymbol{h}_k^{\mathsf{T}} w \). Then:
\begin{align}
	Q_k^{\mathrm{H}}(w; \x_k) &=
	\begin{cases}
		\frac{1}{2}{|\boldsymbol{e}(w)|}^2, \ &\mathrm{for}\ |\boldsymbol{e}(w)| \le c_H\\
		{c_H |\boldsymbol{e}(w)|} - \frac{1}{2} c_H^2, \ &\mathrm{for}\ |\boldsymbol{e}(w)| > c_H.\\
  \end{cases} \\
	Q_k^{\mathrm{B}}(w; \x_k) &=
	\begin{cases}
		\frac{c_B^2}{6}\left( 1 - {\left( 1-\frac{{{|\boldsymbol{e}(w)|}^2}}{c_B^2} \right)}^3 \right), \ &\mathrm{for}\ |\boldsymbol{e}(w)| \le c_B\\
		\frac{c_B^2}{6} \ &\mathrm{otherwise}
  \end{cases}
\end{align}
where \( c_H, c_B \) are tuning constants. The Huber cost is merely convex (and not strongly-convex), while the Tukey loss is non-convex. Both losses satisfy assumptions~\ref{as:strongly_connected}--\ref{as:gradientnoise} imposed in this work. In particular, since the Huber risk \( J_k^{\mathrm{H}}(w) \) has a unique, local minimum, which also happens to be locally strongly-convex, we can conclude that despite the absence of strong-convexity, the algorithm will converge to within \( O(\mu) \) of the global minimum. The Tukey loss on the other hand, is non-convex, and is therefore a more challenging problem. The setting for the simulation results is shown in Figure~\ref{fig:setting}.
\begin{figure}[!t]
\centering
\subfloat{
  \includegraphics[width=.45\linewidth]{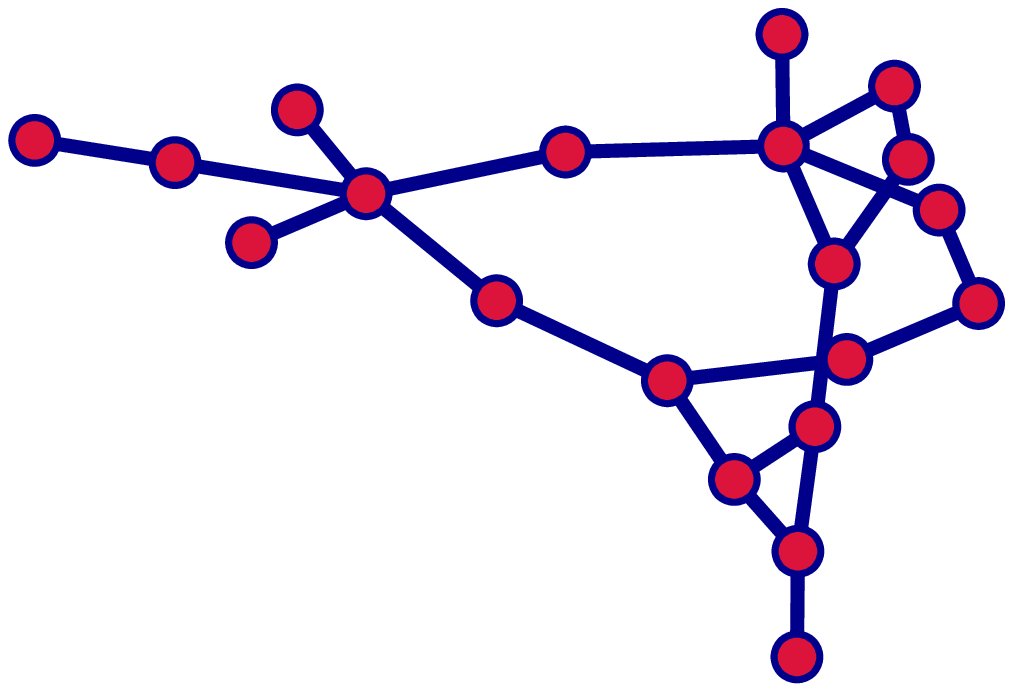}%
}
\hfil
\subfloat{
  \includegraphics[width=.45\linewidth]{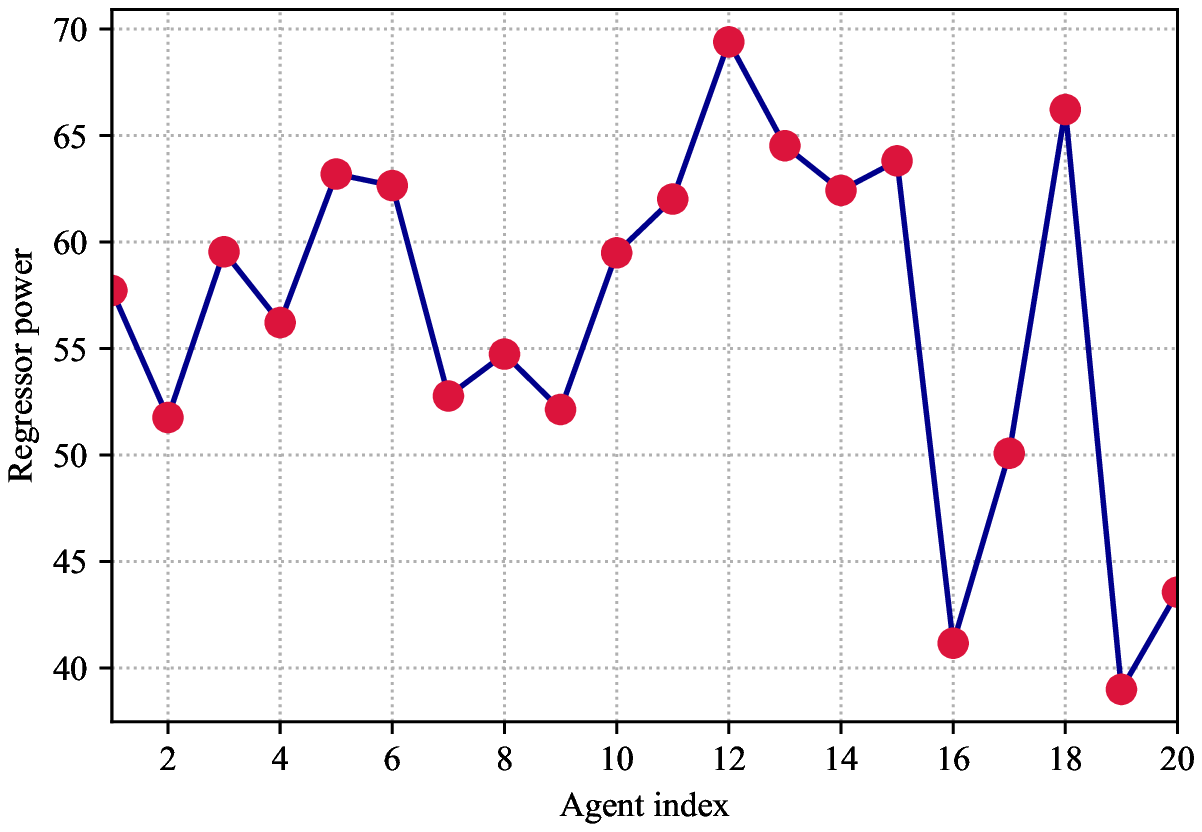}
}
\caption{Graph with \( N = 20 \) nodes (left) and regressor power \( \mathrm{Tr}\left( R_{h,k} \right) \) at each agent (right).}\label{fig:setting}
\end{figure}

Performance is illustrated in Fig.~\ref{fig:performance}. We first show the performance of each cost in the nominal scenario, where \( \boldsymbol{v}(k) \sim \mathcal{N}(0, \sigma_v^2) \). We observe that the distributed strategies outperform the non-cooperative ones, and that despite differences in the rate of convergence, there is negligible difference in the performance of the mean-square-error, Huber and Tukey variations. In the presence of outliers, modeled as a bimodal distribution with \( \boldsymbol{v}(k) \sim (1-\epsilon) \mathcal{N}(0, \sigma_v^2) + \epsilon \mathcal{N}( 10, \sigma_v^2 )\) and \( \epsilon = 0.1 \), the performance of the mean-square-error solution dramatically deteriorates, as is to be expected in the presence of deviations from the nominal model.
\begin{figure}[!t]
\centering
\subfloat{
  \includegraphics[width=.45\linewidth]{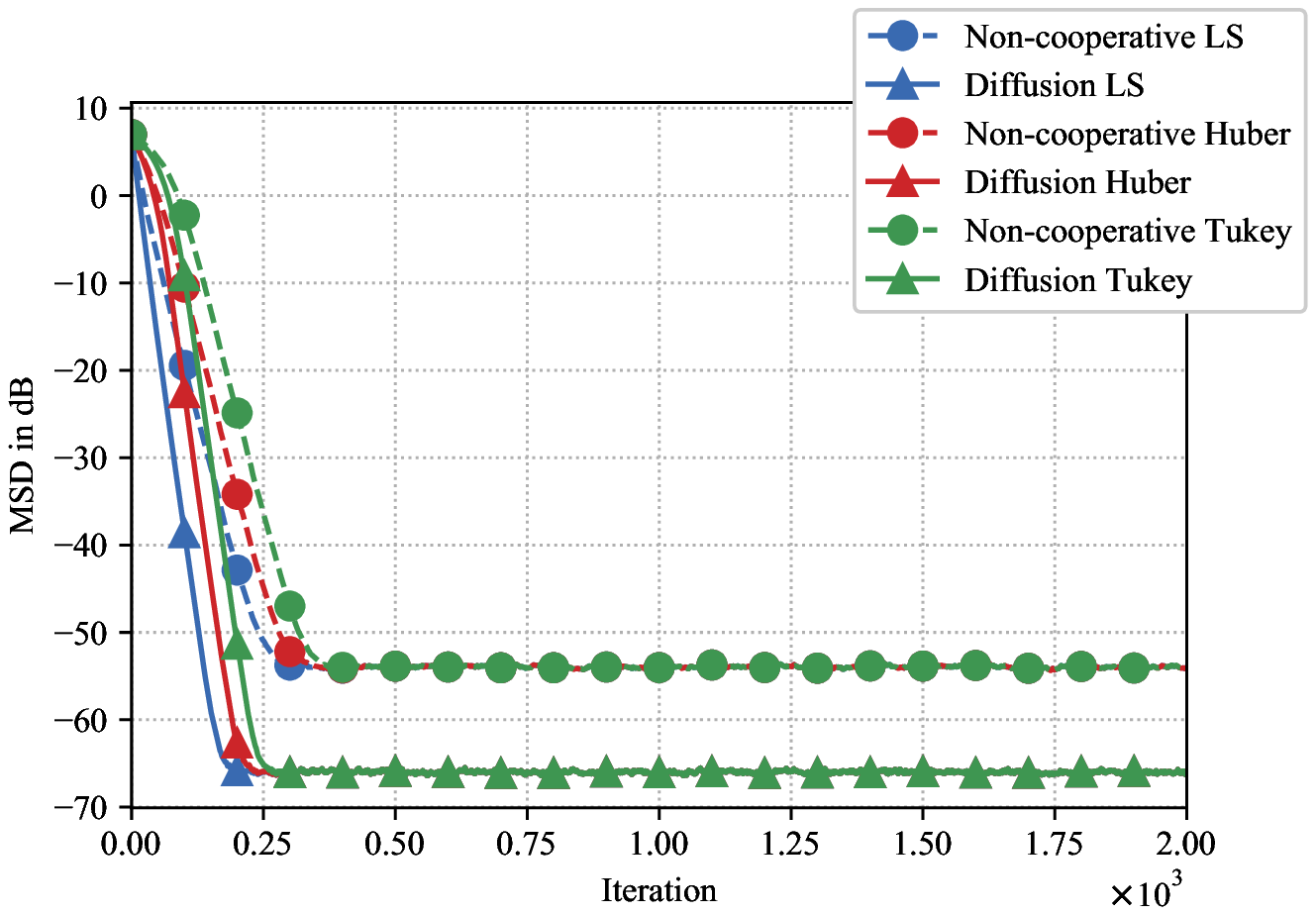}%
}
\hfil
\subfloat{
  \includegraphics[width=.45\linewidth]{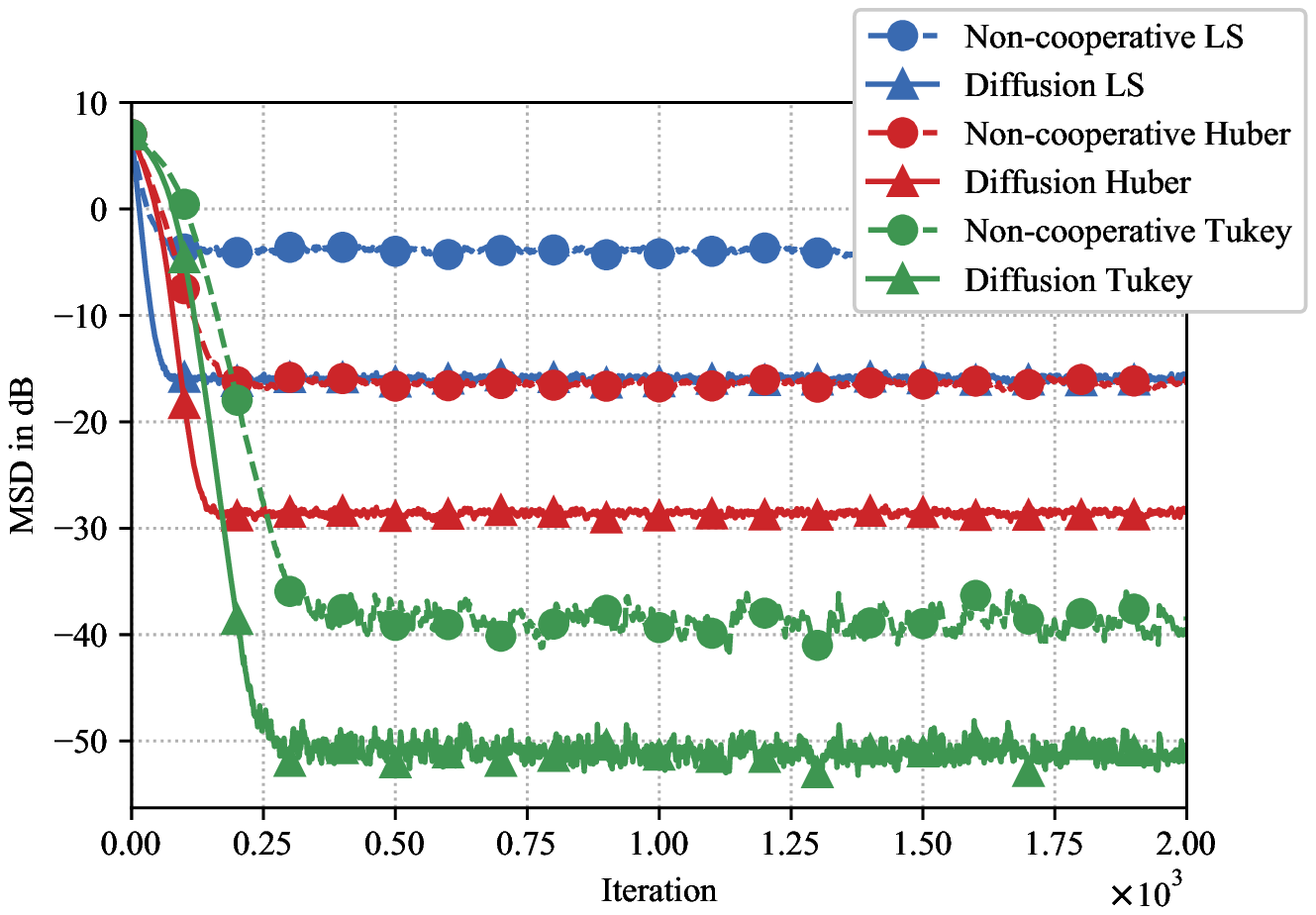}
}
\caption{Performance in the nominal (left) and corrupted case (right).}\label{fig:performance}
\end{figure}

\appendices%
\section{Proof of Lemma~\ref{LEM:NETWORK_DISAGREEMENT_FOURTH}}\label{ap:network_disagreement_fourth}
\noindent Starting from~\eqref{eq:recursive_disagreement}, taking norms of both sides and computing the fourth power, we find:
\begin{align}
  &\:{\left \| \mathcal{V}_R^{\T} \bcw_i \right \|}^4 \notag \\
  \ifarx=&\: {\left \| J_{\epsilon}^{\T}\mathcal{V}_R^{\T} \bcw_{i-1} + \mu  J_{\epsilon}^{\T}\mathcal{V}_R^{\T} \widehat{\boldsymbol{g}}(\bcw_{i-1}) \right \|}^4 \notag \\ \fi
  \le&\:  {\left\| J_{\epsilon}^{\T} \right\|}^4 {\left \|\mathcal{V}_R^{\T} \bcw_{i-1} + \mu  \mathcal{V}_R^{\T} \widehat{\boldsymbol{g}}(\bcw_{i-1}) \right \|}^4 \notag \\
  \stackrel{(a)}{\le}&\:  {\left\| J_{\epsilon}^{\T} \right\|} {\left \|\mathcal{V}_R^{\T} \bcw_{i-1} \right \|}^4 + \mu^4 \frac{{\left\| J_{\epsilon}^{\T} \right\|}^4}{{\left(1-{\left\| J_{\epsilon}^{\T} \right\|}\right)}^3} {\left\|  \mathcal{V}_R^{\T} \widehat{\boldsymbol{g}}(\bcw_{i-1}) \right \|}^4\label{eq:intermediate_dis}
\end{align}
where step \( (a) \) follows from convexity of \( \|\cdot\|^4 \) and Jensen's inequality, i.e. \( {\left \|a + b \right\|}^4 = \frac{1}{\alpha^3}{\left\| a\right\|}^4 + \frac{1}{{(1-\alpha)}^3}{\left\| b\right\|}^4 \). {To begin with, we study the stochastic gradient term in some greater detail. We have:
\begin{align}
  &\: {\left\|  \mathcal{V}_R^{\T} \widehat{\boldsymbol{g}}(\bcw_{i-1}) \right \|}^4 \notag \\
  =&\: {\left\|  \mathcal{V}_R^{\T} {{g}}(\bcw_{i-1}) + \mathcal{V}_R^{\T}\mathrm{col}\left\{\s_{k, i}(\w_{k, i-1})\right\} \right \|}^4 \notag \\
  \le&\: 8 {\left\|  \mathcal{V}_R^{\T} {{g}}(\bcw_{i-1})\right \|}^4 + 8{\left\|  \mathcal{V}_R^{\T}\mathrm{col}\left\{\s_{k, i}(\w_{k, i-1})\right\} \right \|}^4
\end{align}
For the first term we have:
\begin{align}
  &\: 8 {\left\|  \mathcal{V}_R^{\T} {{g}}(\bcw_{i-1})\right \|}^4 \notag \\
  \stackrel{(a)}{=}&\: 8 {\left\|  \mathcal{V}_R^{\T} {{g}}(\bcw_{i-1}) - \left( \mathds{1} p^{\T} \otimes I \right) {{g}}(\bcw_{i-1}) \right \|}^4 \notag \\
  \stackrel{(b)}{\le}&\: 8 {\left\|\mathcal{V}_R^{\T} \right\|}^4 {\left\|  {{g}}(\bcw_{i-1}) - \left( \mathds{1} p^{\T} \otimes I \right) {{g}}(\bcw_{i-1}) \right \|}^4 \notag \\
  \stackrel{(c)}{=}&\: 8 {\left\|\mathcal{V}_R^{\T} \right\|}^4 {\left( \sum_{k=1}^N {\left\| \nabla J_k(\w_{k, i-1}) - \nabla J(\w_{k, i-1}) \right \|}^2 \right)}^2 \notag \\
  \stackrel{\eqref{eq:bounded}}{\le}&\: 8 {\left\|\mathcal{V}_R^{\T} \right\|}^4 {\left( \sum_{k=1}^N G^2 \right)}^2 {\le}\:  8 {\left\|\mathcal{V}_R^{\T} \right\|}^4 N^2 G^4
\end{align}
where \( (a) \) follows from the fact that~\eqref{eq:jordan} implies \( V_R^{\T} \mathds{1} = 0 \), \( (b) \) follows from the sub-multiplicity of norms and \( (c) \) expands \( {\|\cdot\|}^2 \).} {For the gradient noise term we find under expectation:
\begin{align}
  &\: 8 \E {\left\|  \mathcal{V}_R^{\T}\mathrm{col}\left\{\s_{k, i}(\w_{k, i-1})\right\} \right \|}^4 \notag \\
  =&\: 8 {\left\|  \mathcal{V}_R^{\T}\right \|}^4 \E {\left\|  \mathrm{col}\left\{\s_{k, i}(\w_{k, i-1})\right\} \right \|}^4 \notag \\
  =&\: 8 {\left\|  \mathcal{V}_R^{\T}\right \|}^4 \E {\left( \sum_{k=1}^N {\left\|  \s_{k, i}(\w_{k, i-1}) \right \|}^2 \right)}^2 \notag \\
  \stackrel{(a)}{\le}&\: 8 {\left\|  \mathcal{V}_R^{\T}\right \|}^4 N \sum_{k=1}^N \E {\left\|  \s_{k, i}(\w_{k, i-1}) \right \|}^4  \notag \\
  \stackrel{\eqref{eq:gradientnoise_fourth}}{\le}&\: 8 {\left\|  \mathcal{V}_R^{\T}\right \|}^4 N \sum_{k=1}^N \sigma^4 =\: 8 {\left\|  \mathcal{V}_R^{\T}\right \|}^4 N^2  \sigma^4
\end{align}
where \( (a) \) follows from Cauchy-Schwarz, which implies \( {\left( \sum_{k=1}^N x_k \right)}^2 \le N \sum_{k=1}^N x_k^2  \). Plugging these relations back into~\eqref{eq:intermediate_dis}, we obtain:
\begin{align}
  &\:\E  {\left \| \mathcal{V}_R^{\T} \bcw_i \right \|}^4  \notag \\
  \le&\: {\left \|J_{\epsilon}^{\T} \right \|} \E {\left \| \mathcal{V}_R^{\T} \bcw_{i-1}  \right \|}^4 + \mu^4 \frac{{8}{\left \|J_{\epsilon}^{\T} \right \|}^4}{{\left(1-{\left \|J_{\epsilon}^{\T} \right \|}\right)}^3} {\| \mathcal{V}_R^{\T} \|}^4 N^2 \left( G^4 + \sigma^4 \right)
\end{align}
We can iterate, starting from \( i = 0 \), to obtain:
\begin{align}
  &\:\E  {\left \| \mathcal{V}_R^{\T} \bcw_i \right \|}^4  \notag \\
  \le&\: {\left \|J_{\epsilon}^{\T} \right \|}^i \E {\left \| \mathcal{V}_R^{\T} \cw_{0}  \right \|}^4 \notag \\
  &\:+ \mu^4 \frac{{8}{\left \|J_{\epsilon}^{\T} \right \|}^4}{{\left(1-{\left \|J_{\epsilon}^{\T} \right \|}\right)}^3} {\| \mathcal{V}_R^{\T} \|}^4 N^2 \left( G^4 + \sigma^4 \right) \sum_{n=1}^i \|J_{\epsilon}^{\T}\|^{n-1} \notag \\
  \stackrel{(a)}{\le}&\: {\left \|J_{\epsilon}^{\T} \right \|}^i \E {\left \| \mathcal{V}_R^{\T} \cw_{0}  \right \|}^4 \notag \\
  &\:+ \mu^4 \frac{{8}{\left \|J_{\epsilon}^{\T} \right \|}^4}{{\left(1-{\left \|J_{\epsilon}^{\T} \right \|}\right)}^4} {\| \mathcal{V}_R^{\T} \|}^4 N^2 \left( G^4 + \sigma^4 \right) \notag \\
  \stackrel{(b)}{\le}&\: o(\mu^4) + \mu^4 \frac{{8}{\left \|J_{\epsilon}^{\T} \right \|}^4}{{\left(1-{\left \|J_{\epsilon}^{\T} \right \|}\right)}^4} {\| \mathcal{V}_R^{\T} \|}^4 N^2 \left( G^4 + \sigma^4 \right) \label{eq:just_derived_fourth}
\end{align}
where \( (a) \) follows from \( \sum_{n=1}^i \|J_{\epsilon}^{\T}\|^{n-1} \le \sum_{n=1}^{\infty} \|J_{\epsilon}^{\T}\|^{n-1} = {\left( 1 - \|J_{\epsilon}^{\T}\| \right)}^{-1} \), and \( (b) \) holds whenever:
\begin{align}
  &\: {\left \|J_{\epsilon}^{\T} \right \|}^i \E {\left \| \mathcal{V}_R^{\T} \cw_{0}  \right \|}^4 \le o(\mu^4) \Longleftrightarrow\: {\left \|J_{\epsilon}^{\T} \right \|}^i \le o(\mu^4) \notag \\
  \Longleftrightarrow\: & i \log\left( {\left \|J_{\epsilon}^{\T} \right \|} \right) \le \log\left( o(\mu^4) \right) \Longleftrightarrow\: i \ge \frac{\log\left( o(\mu^4) \right)}{\log\left( {\left \|J_{\epsilon}^{\T} \right \|} \right)}\label{eq:intermediate_4444444}
\end{align}
Finally, we have from~\eqref{eq:deviation_from_centroid_transform} under~\eqref{eq:intermediate_4444444}:
\begin{align}
  &\:\E {\left \| \bcw_i - \left( \mathds{1} p^{\T} \otimes I \right) \bcw_i \right \|}^4 =\:\E {\left \| \mathcal{V}_L \mathcal{V}_R^{\T} \bcw_i \right \|}^4 \notag \\
  \ifarx \stackrel{(a)}{\le}&\: {\left \| \mathcal{V}_L \right \|}^4 \E {\left \| \mathcal{V}_R^{\T} \bcw_i \right \|}^4 \notag \\ \fi
  \stackrel{\eqref{eq:just_derived_fourth}}{\le}&\: \mu^4 {\left \| \mathcal{V}_L \right \|}^4 \frac{{\left \|J_{\epsilon}^{\T} \right \|}^4}{{\left(1-{\left \|J_{\epsilon}^{\T} \right \|}\right)}^4} {\| \mathcal{V}_R^{\T} \|}^4 N^2 \left( G^4 + \sigma^4 \right) + o(\mu^4)\label{eq:intermediate_13513523235}
\end{align}
\ifarx where \( (a) \) follows from the sub-multiplicative property of norms.\fi We conclude that all agents in the network will contract around the centroid vector \( \left( \mathds{1} p^{\T} \otimes I \right) \bcw_i  \) after sufficient iterations.}

\section{Proof of Lemma~\ref{LEM:PERTURBATION_BOUNDS_FOURTH}}\label{ap:perturbation_bounds_fourth}
\noindent We begin by studying the perturbation term \( \boldsymbol{s}_{i} \). We have:
\begin{align}
  &\: \E\left \{ {\| \boldsymbol{s}_{i} \|}^4 | \boldsymbol{\mathcal{F}}_{i-1} \right \} \notag \\
  =&\: \E  \left \{ { \left \| \sum_{k=1}^N p_k \left( \widehat{\nabla J}_k (\w_{k, i-1}) - {\nabla J}_k (\w_{k, i-1}) \right) \right \|}^4 | \boldsymbol{\mathcal{F}}_{i-1} \right \} \notag \\
  \stackrel{(a)}{\le}&\: \sum_{k=1}^N p_k \E \left \{ { \left \| \widehat{\nabla J}_k (\w_{k, i-1}) - {\nabla J}_k (\w_{k, i-1}) \right \|}^4 | \boldsymbol{\mathcal{F}}_{i-1} \right \} \notag \\
  \stackrel{(b)}{\le}&\: \sum_{k=1}^N p_k \sigma^4 =\: \sigma^4
\end{align}
where \( (a) \) follows from \( \sum_{k=1}^N p_k = 1 \) and Jensen's inequality and \( (b) \) follows from the fourth-order moment condition in Assumption~\ref{as:gradientnoise}. For the second perturbation term, we have
\begin{align}
  {\|\boldsymbol{d}_{i-1}\|}^4 &= {\left \| \sum_{k=1}^N p_k \left( {\nabla J}_k (\w_{k, i-1}) - {\nabla J}_k (\w_{c, i-1}) \right)\right \|}^4 \notag \\
  &\stackrel{(a)}{\le}  \sum_{k=1}^N p_k {\left \| {\nabla J}_k (\w_{k, i-1}) - {\nabla J}_k (\w_{c, i-1}) \right \|}^4 \notag \\
  &\stackrel{(b)}{\le}  \delta^4 \sum_{k=1}^N p_k {\left \| \w_{k, i-1} - \w_{c, i-1} \right \|}^4 \notag \\
  &\le  \delta^4 p_{\max} \sum_{k=1}^N {\left \| \w_{k, i-1} - \w_{c, i-1} \right \|}^4 \notag \\
  &\le  \delta^4 p_{\max} {\left( \sum_{k=1}^N {\left \| \w_{k, i-1} - \w_{c, i-1} \right \|}^2 \right)}^2 \notag \\
  &=  \delta^4 p_{\max}  {\left \| \bcw_{i-1} - \bcw_{c, i-1} \right \|}^4 \label{eq:d_bound_fourth}
\end{align}
where \( (a) \) again follows from Jensen's inequality, \( (b) \) follows from the Lipschitz gradient condition in Assumption~\ref{as:lipschitz}, and we introduced \( \bcw_{c, i-1} \triangleq \mathds{1} \otimes \w_{c, i-1} \). Result~\eqref{eq:d_omufourth} follows by applying~\eqref{eq:intermediate_13513523235} to~\eqref{eq:d_bound_fourth}.

\section{Proof of Lemma~\ref{LEM:LIMITING_RESULTS}}\label{AP:LIMITING_RESULTS}
\ifarx \noindent For the natural logarithm of the expression, we have:
\begin{align}
  &\: \log{\left( \frac{{(1+\mu \delta)}^k}{{\left(1-{\mu \delta}\right)}^{k-1}} \right)}^{\frac{T}{\mu}} \notag \\
  =&\: \frac{T}{\mu} \left( k\log\left(1+\mu \delta\right) - (k-1) \log {\left( 1-\mu \delta \right)} \right)
\end{align} \fi
Since the logarithm is continuous over \( \mathds{R}_{+} \), we have:
\begin{align}
  &\: \log \left( \lim_{\mu \to 0} {\left( \frac{{(1+\mu \delta)}^k}{{\left(1-{\mu \delta}\right)}^{k-1}} \right)}^{\frac{T}{\mu}} \right) \notag \\
  =&\: \lim_{\mu \to 0} \log \left( {\left( \frac{{(1+\mu \delta)}^k}{{\left(1-{\mu \delta}\right)}^{k-1}} \right)}^{\frac{T}{\mu}} \right) \notag \\
  \ifarx =&\: \lim_{\mu \to 0} \frac{T}{\mu} \left( k\log\left(1+\mu \delta\right) - (k-1) \log {\left( 1-\mu \delta \right)} \right) \notag \\ \fi
  =&\: kT \lim_{\mu \to 0} \frac{\log\left(1+\mu \delta\right)}{\mu} - (k-1)T \lim_{\mu \to 0}\frac{\log {\left( 1-\mu \delta \right)}}{\mu}
\end{align}
We examine the fraction inside the limit more closely. Since both the numerator and denominator of the fraction approach zero as \( \mu \to 0 \), we apply L'H\^{o}pital's rule:
\begin{align}
  \lim_{\mu \to 0}  \frac{ \log   \left( 1 \pm \mu \delta \right)}{\mu} = \lim_{\mu \to 0}  \frac{\pm \delta}{1\pm \mu \delta} = \pm \delta
\end{align}
Hence, we find:
\begin{align}
  \lim_{\mu \to 0} {\left( \frac{{(1+\mu \delta)}^k}{{\left(1-{\mu \delta}\right)}^{k-1}} \right)}^{\frac{T}{\mu}} = e^{kT\delta + (k-1)T \delta} = e^{-T \delta + 2k T \delta}
\end{align}

\section{Proof of Theorem~\ref{TH:DESCENT_RELATION}}\label{ap:descent_relation}
\noindent{We begin with~\eqref{eq:lipschitz_before_exp} and take expectations conditioned on \( \bcw_{i-1} \) to obtain:}
\begin{align}
	&\:\E \left \{ J(\w_{c, i}) | \bcw_{i-1} \right \} \notag \\
	\stackrel{(a)}{\le}&\: J(\w_{c, i-1}) - \mu {\left \| {\nabla J(\w_{c, i-1})} \right \|}^2 - \mu {\nabla J(\w_{c, i-1})}^{\T} \boldsymbol{d}_{i-1} \notag \\
	&\:+ \mu^2 \frac{\delta}{2} {\left \| {\nabla J(\w_{c, i-1})} + \boldsymbol{d}_{i-1} \right \|}^2 + \mu^2 \frac{\delta}{2} \E \left \{{\left \| \s_i \right \|}^2 | \bcw_{i-1} \right \} \notag \\
	\stackrel{(b)}{\le}&\: J(\w_{c, i-1}) - \mu {\left \| {\nabla J(\w_{c, i-1})} \right \|}^2 + \frac{\mu}{2} {\left \| \nabla J(\w_{c, i-1})\right \|}^{2} \notag \\
  &\:+ \frac{\mu}{2} {\left \| \boldsymbol{d}_{i-1} \right \|}^2 + \mu^2 \delta {\left \| {\nabla J(\w_{c, i-1})} \right \|}^2 + \mu^2 \delta {\left \| \boldsymbol{d}_{i-1} \right \|}^2 \notag \\
  &\:+ \mu^2 \frac{\delta}{2} \E \left \{{\left \| \s_i \right \|}^2 | \bcw_{i-1} \right \} \notag \\
	\stackrel{(c)}{\le}&\: J(\w_{c, i-1}) - \frac{\mu}{2}\left(1 - 2 \mu \delta \right) {\left \| {\nabla J(\w_{c, i-1})} \right \|}^2 \notag \\
  &\: + \frac{\mu}{2} \left( 1 + 2 \mu \delta \right) {\left \| \boldsymbol{d}_{i-1} \right \|}^2 + \mu^2 \frac{\delta}{2} \sigma^2\label{eq:interm_32444}
\end{align}
where cross-terms were removed in \( (a) \) due to the conditional zero-mean condition~\eqref{eq:conditional_zero_mean}, \( (b) \) follows from \( {\|a + b\|}^2 \le 2{\|a\|}^2 + 2{\|b\|}^2 \) and from \(-2a^{\T} b \leq \|a\|^2 + \|b\|^2\) and \( (c) \) is a result of grouping terms and Lemma~\ref{LEM:PERTURBATION_BOUNDS_FOURTH}.

{Note that~\eqref{eq:interm_32444} continues to be random due to the conditioning on \( \bcw_{i-1} \), but that it holds for every choice of \( \bcw_{i-1} \) with probability \( 1 \). Furthermore, since \( \w_{c, i-1} = \sum_{k=1}^N p_k \w_{k, i-1}\), the centroid \( \w_{c, i-1} \) is deterministic conditioned on \( \bcw_{i-1} \). As such, the event \( \w_{c, i-1} \in \mathcal{G} \) is deterministing conditioned on \( \bcw_{i-1} \), and~\eqref{eq:interm_32444} holds for every \( \w_{c, i-1} \in \mathcal{G} \). We can hence take expectations over \( \w_{c, i-1} \in \mathcal{G} \) and apply Lemma~\ref{LEM:CONDITIONING} to find:}
\begin{align}\label{eq:intermediate_323423423}
	&\:\E \left \{ J(\w_{c, i}) | \w_{c, i-1} \in \mathcal{G} \right \} \notag \\
  \le&\: \E \left \{ J(\w_{c, i-1}) | \w_{c, i-1} \in \mathcal{G} \right \}\notag \\
  &\: - \frac{\mu}{2}\left(1 - 2 \mu \delta \right) \E \left \{ {\left \| {\nabla J(\w_{c, i-1})} \right \|}^2 | \w_{c, i-1} \in \mathcal{G} \right \} \notag \\
	&\:+ \frac{\mu}{2} \left( 1 + 2 \mu \delta \right) \E \left \{ {\|\boldsymbol{d}_{i-1}\|}^2 | \w_{c, i-1} \in \mathcal{G} \right \} + \mu^2 \frac{\delta}{2} \sigma^2 \notag \\
  \stackrel{(a)}{\le}&\: \E \left \{ J(\w_{c, i-1}) | \w_{c, i-1} \in \mathcal{G} \right \} - \mu^2 c_1 \frac{c_2}{c_1} \left( 1+\frac{1}{\pi} \right) \notag \\
	&\:+ O(\mu) \E \left \{ {\|\boldsymbol{d}_{i-1}\|}^2 | \w_{c, i-1} \in \mathcal{G} \right \} + \mu^2 c_2 \notag \\
  \stackrel{(b)}{\le}&\: \E \left \{ J(\w_{c, i-1}) | \w_{c, i-1} \in \mathcal{G} \right \} - \mu^2 \frac{c_2}{\pi} \notag \\
	&\:+ \frac{\mu}{2} \left( 1 + 2 \mu \delta \right) \E \left \{ {\|\boldsymbol{d}_{i-1}\|}^2 | \w_{c, i-1} \in \mathcal{G} \right \}
\end{align}
{In step \( (a) \) we applied definition~\ref{DEF:SETS}, and in particular, that from~\eqref{eq:define_g} \( {\left \| \nabla J(\w_{c, i-1}) \right \|}^2 \ge \mu \frac{c_2}{c_1}\left(1+ \frac{1}{\pi}\right) \) whenever \( \w_{c, i-1} \in \mathcal{G} \), which implies:
\begin{equation}
  \E \left \{{\left \| \nabla J(\w_{c, i-1}) \right \|}^2 | \w_{c, i-1} \in \mathcal{G} \right \} \ge \mu \frac{c_2}{c_1}\left(1+ \frac{1}{\pi}\right)
\end{equation}
We also collected constants into \( c_1 \) and  \( c_2 \) defined in~\eqref{eq:define_c1}--\eqref{eq:define_c2} for brevity. Step \( (b) \) is obtained by grouping terms. Note that from lemma~\ref{LEM:PERTURBATION_BOUNDS_FOURTH}, we have a bound on \( \E {\|\boldsymbol{d}_{i-1}\|}^2 \), but not on the partial expectation conditioned over \( \w_{c, i-1} \in \mathcal{G} \). We can decompose the full expectation:}
\begin{align}
  &\: \E \left \{ {\|\boldsymbol{d}_{i-1}\|}^2 \right \} \notag \\
  =&\: \E \left \{ {\|\boldsymbol{d}_{i-1}\|}^2 | \w_{c, i-1} \in \mathcal{G} \right \} \cdot \pi_{i-1}^{\mathcal{G}} \notag \\
  &\: + \E \left \{ {\|\boldsymbol{d}_{i-1}\|}^2 | \w_{c, i-1} \in \mathcal{G}^C \right \} \cdot \pi_{i-1}^{\mathcal{G}^C} \stackrel{\eqref{eq:d_omufourth}}{\le} \: O(\mu^2)
\end{align}
which implies
\begin{equation}
  \E \left \{ {\|\boldsymbol{d}_{i-1}\|}^2 | \w_{c, i-1} \in \mathcal{G} \right \} \le \frac{O(\mu^2)}{\pi_{i-1}^{\mathcal{G}}}
\end{equation}
so that we obtain for~\eqref{eq:intermediate_323423423}:
\begin{align}
  &\:\E \left \{ J(\w_{c, i}) | \w_{c, i-1} \in \mathcal{G} \right \} \notag \\
  \le&\: \E \left \{ J(\w_{c, i-1}) | \w_{c, i-1} \in \mathcal{G} \right \} - \mu^2 \frac{c_2}{\pi} + \frac{O(\mu^3)}{\pi_{i-1}^{\mathcal{G}}}\label{eq:intermediate_bound_divide}
\end{align}
Similarly:
\begin{align}
	&\:\E \left \{ J(\w_{c, i}) | \w_{c, i-1} \in \mathcal{M} \right \} \notag \\
  \le&\: \E \left \{ J(\w_{c, i-1}) | \w_{c, i-1} \in \mathcal{M} \right \}\notag \\
  &\: - \frac{\mu}{2}\left(1 - 2 \mu \delta \right) \E \left \{ {\left \| {\nabla J(\w_{c, i-1})} \right \|}^2 | \w_{c, i-1} \in \mathcal{M} \right \} \notag \\
	&\:+ \frac{\mu}{2} \left( 1 + 2 \mu \delta \right) \E \left \{ {\|\boldsymbol{d}_{i-1}\|}^2 | \w_{c, i-1} \in \mathcal{M} \right \} + \mu^2 \frac{\delta}{2} \sigma^2 \notag \\
  \stackrel{(a)}{\le}&\: \E \left \{ J(\w_{c, i-1}) | \w_{c, i-1} \in \mathcal{M} \right \} + \mu^2 c_2 \notag \\
	&\:+ \frac{\mu}{2} \left( 1 + 2 \mu \delta \right) \E \left \{ {\|\boldsymbol{d}_{i-1}\|}^2 | \w_{c, i-1} \in \mathcal{M} \right \} \notag \\
  \stackrel{(b)}{\le}&\: \E \left \{ J(\w_{c, i-1}) | \w_{c, i-1} \in \mathcal{M} \right \} + \mu^2 c_2 + \frac{O(\mu^3)}{\pi_{i-1}^{\mathcal{M}}}
\end{align}
where \( (a) \) follows from the fact that \( {\left \| {\nabla J(\w_{c, i-1})} \right \|}^2 \ge 0\) with probability \( 1 \) and \( (b) \) made use of the same argument that led to~\eqref{eq:intermediate_bound_divide}.

\section{Proof of Lemma~\ref{LEM:DEVIATION_BOUNDS}}\label{AP:DEVIATION_BOUNDS}

We refer to~\eqref{eq:error_recursion}. Suppose \( i \le \frac{T}{\mu} \), where \( T \) is an arbitrary constant independent of \( \mu \). We then have for \( i \ge 0 \):
\begin{align}
  &\: \E \left \{ {\left \| \widetilde{\w}_{i+1}^{i^{\star}} \right \|}^2 | \boldsymbol{\mathcal{F}}_{i^{\star}+i} \right \} \notag \\
  \stackrel{\eqref{eq:error_recursion}}{=}&\: \E \Big \{ \Big \| \left( I - \mu \boldsymbol{H}_{i^{\star}+i} \right)  \widetilde{\w}_{i}^{i^{\star}}  + \mu \nabla J(\w_{c, i^{\star}}) \notag \\
  &\: \ \ \ \ \ \ \  + \mu \boldsymbol{d}_{i^{\star}+i} + \mu \s_{i^{\star}+i+1}\Big \|^2 | \boldsymbol{\mathcal{F}}_{i^{\star}+i} \Big \} \notag \\
  \stackrel{(a)}{=}&\: {\left \| \left( I - \mu \boldsymbol{H}_{i^{\star}+i} \right) \widetilde{\w}_{i}^{i^{\star}}+ \mu \nabla J(\w_{c, i^{\star}}) +\mu \boldsymbol{d}_{i^{\star}+i} \right \|}^2 \notag \\
  &\: + \mu^2 \E \left \{ {\left \| \s_{i^{\star}+i+1}\right \|}^2 | \boldsymbol{\mathcal{F}}_{i^{\star}+i} \right \} \notag \\
  \stackrel{(b)}{=}&\: \frac{1}{1-\mu \delta} {\left \|\left( I - \mu \boldsymbol{H}_{i^{\star}+i} \right) \widetilde{\w}_{i}^{i^{\star}}\right \|}^2 + \frac{\mu}{\delta} {\left \| \nabla J(\w_{c, i^{\star}}) + \boldsymbol{d}_{i^{\star}+i}\right \|}^2 \notag \\
  &\:+ \mu^2 \E \left \{ {\left \| \s_{i^{\star}+i+1}\right \|}^2 | \boldsymbol{\mathcal{F}}_{i^{\star}+i} \right \} \notag \\
  \stackrel{(c)}{=}&\: \frac{1}{1-\mu \delta} {\left \|\left( I - \mu \boldsymbol{H}_{i^{\star}+i} \right) \widetilde{\w}_{i}^{i^{\star}}\right \|}^2 + 2 \frac{\mu}{\delta} {\left \| \nabla J(\w_{c, i^{\star}}) \right \|}^2 \notag \\
  &\:+ 2 \frac{\mu}{\delta} {\left \| \boldsymbol{d}_{i^{\star}+i}\right \|}^2 + \mu^2 \E \left \{ {\left \| \s_{i^{\star}+i+1}\right \|}^2 | \boldsymbol{\mathcal{F}}_{i^{\star}+i} \right \} \notag \\
  \stackrel{(d)}{\le}&\: \frac{{(1+\mu \delta)}^2}{1-\mu \delta} {\left \| \widetilde{\w}_{i}^{i^{\star}}\right \|}^2 + 2 \frac{\mu}{\delta} {\left \| \nabla J(\w_{c, i^{\star}}) \right \|}^2 \notag \\
  &\:+ 2 \frac{\mu}{\delta} {\left \| \boldsymbol{d}_{i^{\star}+i}\right \|}^2 + \mu^2 \E \left \{ {\left \| \s_{i^{\star}+i+1}\right \|}^2 | \boldsymbol{\mathcal{F}}_{i^{\star}+i} \right \}
\end{align}
where \( (a) \) follows from the conditional zero-mean property of the gradient noise term in Assumption~\ref{as:gradientnoise}, \( (b) \) follows from Jensen's inequality
\begin{equation}
  {\|a + b\|}^2 \le \frac{1}{\alpha} {\|a\|}^2 + \frac{1}{1-\alpha} {\|b\|}^2\label{eq:jensens_second}
\end{equation}
with \( \alpha = \mu \delta < 1\) and \( (c) \) follows from the same inequality with \( \alpha = \frac{1}{2} \). Step \( (d) \) follows from the sub-multiplicative property of norms along with \( - \delta I \le \nabla^2 J(\w_{c, i^{\star}}) \le \delta I \), which follows from the Lipschitz gradient condition in Assumption~\ref{as:lipschitz}. {Since \( \w_{c, i^{\star}}  \) is deterministic conditioned on \( \boldsymbol{\mathcal{F}}_{i^{\star}+i} \) we can now take expectations over \(  \w_{c, i^{\star}} \in \mathcal{H} \) to obtain:}
\begin{align}
  &\: \E \left \{ {\left \| \widetilde{\w}_{i+1}^{i^{\star}} \right \|}^2 | \w_{c, i^{\star}} \in \mathcal{H} \right \} \notag \\
  \le&\: \frac{{(1+\mu \delta)}^2}{1-\mu \delta} \E \left \{ {\left \| \widetilde{\w}_{i}^{i^{\star}}\right \|}^2 | \w_{c, i^{\star}} \in \mathcal{H} \right \} \notag \\
  &\: + 2 \frac{\mu}{\delta} \E \left \{ {\left \| \boldsymbol{d}_{i^{\star}+i}\right \|}^2 | \w_{c, i^{\star}} \in \mathcal{H} \right \} \notag \\
  &\: + 2 \frac{\mu}{\delta} \E \left \{ {\left \| \nabla J(\w_{c, i^{\star}}) \right \|}^2 | \w_{c, i^{\star}} \in \mathcal{H} \right \} \notag \\
  &\:  + \mu^2 \E \left \{ {\left \| \s_{i^{\star}+i+1}\right \|}^2 | \w_{c, i^{\star}} \in \mathcal{H} \right \} \notag \\
  \stackrel{(a)}{\le}&\: \frac{{(1+\mu \delta)}^2}{1-\mu \delta} \E \left \{ {\left \| \widetilde{\w}_{i}^{i^{\star}}\right \|}^2 | \w_{c, i^{\star}} \in \mathcal{H} \right \} + 2 \frac{\mu}{\delta} \cdot \frac{O(\mu^2)}{\pi_{i^{\star}}^{\mathcal{H}}} \notag \\
  &\: + 2 \frac{\mu}{\delta} \cdot O(\mu) + O(\mu^2) \notag \\
  \le&\: \frac{{(1+\mu \delta)}^2}{1-\mu \delta} \E \left \{ {\left \| \widetilde{\w}_{i}^{i^{\star}}\right \|}^2 | \w_{c, i^{\star}} \in \mathcal{H} \right \} + O(\mu^2) + \frac{O(\mu^3)}{\pi_{i^{\star}}^{\mathcal{H}}}
\end{align}
where \( (a) \) follows from the perturbation bounds in Lemma~\ref{LEM:PERTURBATION_BOUNDS_FOURTH} and the starting assumption that \( \w_{c, i^{\star}} \) is an \( O(\mu) \)-square stationary point. {Note that, at time \( i=0 \), we have:
\begin{equation}
  \widetilde{\w}_{0}^{i^{\star}} = \w_{c, i^{\star}} - \w_{c, i^{\star} + 0} = 0
\end{equation}
and hence the initial deviation is zero, by definition.} Iterating, starting at \( i = 0 \) yields:
\begin{align}
  &\: \E \left \{ {\left \| \widetilde{\w}_{i}^{i^{\star}} \right \|}^2 | \w_{c, i^{\star}} \in \mathcal{H} \right \} \notag \\
  \le&\: \left( \sum_{n = 0}^{i-1} {\left( \frac{{(1+\mu \delta)}^{2}}{1 - \mu \delta} \right)}^n \right) \left( O(\mu^2) + \frac{O(\mu^3)}{\pi_{i^{\star}}^{\mathcal{H}}} \right) \notag \\
  =&\: \frac{1 - {\left( \frac{{(1+\mu \delta)}^{2}}{1 - \mu \delta} \right)}^i}{1 - {\frac{{(1+\mu \delta)}^{2}}{1 - \mu \delta} }} \left( O(\mu^2) + \frac{O(\mu^3)}{\pi_{i^{\star}}^{\mathcal{H}}} \right) \notag \\
  \ifarx =&\: \frac{\left({\left( \frac{{(1+\mu \delta)}^{2}}{1 - \mu \delta} \right)}^i - 1\right)\left(1 - \mu \delta \right)}{{1 + 2 \mu \delta + \mu^2 \delta^2 - 1 + \mu \delta }} \left( O(\mu^2) + \frac{O(\mu^3)}{\pi_{i^{\star}}^{\mathcal{H}}} \right) \notag \\ \fi
  =&\: \frac{\left({\left( \frac{{(1+\mu \delta)}^{2}}{1 - \mu \delta} \right)}^i - 1\right)\left(1-\mu \delta\right)}{{3 \delta + \mu \delta^2}} \left( O(\mu) + \frac{O(\mu^2)}{\pi_{i^{\star}}^{\mathcal{H}}} \right) \notag \\
  \le&\: \frac{\left({\left( \frac{{(1+\mu \delta)}^{2}}{1 - \mu \delta} \right)}^{\frac{T}{\mu}} - 1\right)\left(1-\mu \delta \right)}{{3 \delta + \mu \delta^2}} \left( O(\mu) + \frac{O(\mu^2)}{\pi_{i^{\star}}^{\mathcal{H}}} \right) \notag \\
  =&\: O(\mu) + \frac{O(\mu^2)}{\pi_{i^{\star}}^{\mathcal{H}}}
\end{align}
where the last line follows from Lemma~\ref{LEM:LIMITING_RESULTS} after noting that:
\ifarx \begin{align}
  &\: \frac{\left({\left( \frac{{(1+\mu \delta)}^{2}}{1 - \mu \delta} \right)}^{\frac{T}{\mu}} - 1\right)\left(1-\mu \delta \right)}{{3 \delta + \mu \delta^2}} \notag \\
  \le&\: \frac{\left({\left( \frac{{(1+\mu \delta)}^{2}}{1 - \mu \delta} \right)}^{\frac{T}{\mu}} - 1\right)\left(1-\mu \delta \right)}{{3 \delta}} \notag \\
  \ifarx \le&\: \frac{{\left( \frac{{(1+\mu \delta)}^{2}}{1 - \mu \delta} \right)}^{\frac{T}{\mu}} - {\left( \frac{{(1+\mu \delta)}^{2}}{1 - \mu \delta} \right)}^{\frac{T}{\mu}} \mu \delta - 1 + \mu \delta}{{3 \delta}} \notag \\ \fi
  \le&\: \frac{{\left( \frac{{(1+\mu \delta)}^{2}}{1 - \mu \delta} \right)}^{\frac{T}{\mu}}  - 1}{{3 \delta}}
\end{align}
\else \begin{align}
  \frac{\left({\left( \frac{{(1+\mu \delta)}^{2}}{1 - \mu \delta} \right)}^{\frac{T}{\mu}} - 1\right)\left(1-\mu \delta \right)}{{3 \delta + \mu \delta^2}}\le \frac{{\left( \frac{{(1+\mu \delta)}^{2}}{1 - \mu \delta} \right)}^{\frac{T}{\mu}}  - 1}{{3 \delta}}
\end{align} \fi
This establishes~\eqref{eq:ms_stability}. We proceed to establish a bound on the fourth-order moment.Using the inequality~\cite{Sayed14}:
\begin{equation}
  \|a+b\|^4 \le \|a\|^4 + 3 \|b\|^4 + 8 \|a\|^2\|b\|^2 + 4 \|a\|^2 \left( a^{\T} b \right)
\end{equation}
we have:
\begin{align}
  &\: \E \left \{ {\left \| \widetilde{\w}_{i+1}^{i^{\star}} \right \|}^4 | \boldsymbol{\mathcal{F}}_{i^{\star}+i} \right \} \notag \\
  \ifarx \le&\: {\left \| \left( I - \mu \boldsymbol{H}_{i^{\star}+i} \right)  \widetilde{\w}_{i}^{i^{\star}} + \mu \nabla J(\w_{c, i^{\star}}) + \mu \boldsymbol{d}_{i^{\star}+i} \right \|}^4 \notag \\
  &\:+ 3 \mu^4 \E \left \{ \left \|   \s_{i^{\star}+i+1}\right \|^4 | \boldsymbol{\mathcal{F}}_{i^{\star}+i} \right \} \notag \\
  &\:+ 8 \mu^2 {\left \| \left( I - \mu \boldsymbol{H}_{i^{\star}+i} \right)  \widetilde{\w}_{i}^{i^{\star}} + \mu \nabla J(\w_{c, i^{\star}}) + \mu \boldsymbol{d}_{i^{\star}+i}\right \|}^2 \notag \\
  &\: \ \ \ \times \E \left \{ \left \|  \s_{i^{\star}+i+1}\right \|^2 | \boldsymbol{\mathcal{F}}_{i^{\star}+i} \right \} \notag \\
  &\:+ 4 \mu \left \| \left( I - \mu \boldsymbol{H}_{i^{\star}+i} \right)  \widetilde{\w}_{i}^{i^{\star}} + \mu \nabla J(\w_{c, i^{\star}}) + \mu \boldsymbol{d}_{i^{\star}+i} \right \|^2 \notag \\
  &\: \ \ \ \times {\left( \left( I - \mu \boldsymbol{H}_{i^{\star}+i} \right)  \widetilde{\w}_{i}^{i^{\star}}  + \mu \nabla J(\w_{c, i^{\star}}) + \mu \boldsymbol{d}_{i^{\star}+i}\right)}^{\T} \notag \\
  &\: \ \ \ \ \ \ \ \times \left( \E \left \{ \s_{i^{\star}+i+1} | \boldsymbol{\mathcal{F}}_{i^{\star}+1} \right \} \right) \notag \\ \fi
  \stackrel{(a)}{=}&\: {\left \| \left( I - \mu \boldsymbol{H}_{i^{\star}+i} \right)  \widetilde{\w}_{i}^{i^{\star}} + \mu \nabla J(\w_{c, i^{\star}}) + \mu \boldsymbol{d}_{i^{\star}+i} \right \|}^4 \notag \\
  &\:+ 3 \mu^4 \E \left \{ \left \|   \s_{i^{\star}+i+1}\right \|^4 | \boldsymbol{\mathcal{F}}_{i^{\star}+i} \right \} \notag \\
  &\:+ 8 \mu^2 {\left \| \left( I - \mu \boldsymbol{H}_{i^{\star}+i} \right)  \widetilde{\w}_{i}^{i^{\star}} + \mu \nabla J(\w_{c, i^{\star}}) + \mu \boldsymbol{d}_{i^{\star}+i}\right \|}^2 \notag \\
  &\: \ \ \ \times \E \left \{ \left \|  \s_{i^{\star}+i+1}\right \|^2 | \boldsymbol{\mathcal{F}}_{i^{\star}+i} \right \} \notag \\
  \stackrel{(b)}{=}&\: {\left \| \left( I - \mu \boldsymbol{H}_{i^{\star}+i} \right)  \widetilde{\w}_{i}^{i^{\star}} + \mu \nabla J(\w_{c, i^{\star}}) + \mu \boldsymbol{d}_{i^{\star}+i} \right \|}^4 + O(\mu^4) \notag \\
  &\:+ {\left \| \left( I - \mu \boldsymbol{H}_{i^{\star}+i} \right)  \widetilde{\w}_{i}^{i^{\star}} + \mu \nabla J(\w_{c, i^{\star}}) + \mu \boldsymbol{d}_{i^{\star}+i}\right \|}^2 O(\mu^2) \notag \\
  \stackrel{(c)}{=}&\: {\left \| \left( I - \mu \boldsymbol{H}_{i^{\star}+i} \right)  \widetilde{\w}_{i}^{i^{\star}} + \mu \nabla J(\w_{c, i^{\star}}) + \mu \boldsymbol{d}_{i^{\star}+i} \right \|}^4 + O(\mu^4) \notag \\
  &\:+ \Big( {\left \| \left( I - \mu \boldsymbol{H}_{i^{\star}+i} \right)  \widetilde{\w}_{i}^{i^{\star}} \right \|}^2 \notag \\
  &\: \ \ \ \ \ + \mu^2 {\left \| \nabla J(\w_{c, i^{\star}}) \right \|}^2 + \mu^2 {\left \| \boldsymbol{d}_{i^{\star}+i}\right \|}^2 \Big) O(\mu^2)
\end{align}
where in step \( (a) \) we dropped cross-terms due to the conditional zero-mean property of the gradient noise in Assumption~\ref{as:gradientnoise}, step \( (b) \) follows from the fourth-order conditions on the gradient noise in Assumption~\ref{as:gradientnoise} along with the perturbation bounds in Lemma~\ref{LEM:PERTURBATION_BOUNDS_FOURTH}, and \( (c) \) follows from Jensen's inequality, i.e. \( {\|a + b + c\|}^2 \le 3{\|a\|}^2 + 3{\|b\|}^2 + 3{\| c\|}^2 \). Taking expectations over \( \w_{c, i^{\star}} \in \mathcal{H} \) on both sides and collecting constant factors along with \( \mu \) in appropriate \( O(\cdot) \) terms:
\ifarx \begin{align}
  &\: \E \left \{  {\left \| \widetilde{\w}_{i+1}^{i^{\star}} \right \|}^4 | \w_{c, i^{\star}} \in \mathcal{H} \right \} \notag \\
  \le&\: \E \bigg \{ \Big \| \left( I - \mu \boldsymbol{H}_{i^{\star}+i} \right)  \widetilde{\w}_{i}^{i^{\star}}   + \mu \nabla J(\w_{c, i^{\star}}) \notag \\
  &\: \ \ \ \ \ \ \ + \mu \boldsymbol{d}_{i^{\star}+i}\Big \|^4 | \w_{c, i^{\star}} \in \mathcal{H} \bigg \} + O(\mu^4) \notag \\
  &\:+ \Big( \E \left \{ {\left \| \left( I - \mu \boldsymbol{H}_{i^{\star}+i} \right)  \widetilde{\w}_{i}^{i^{\star}}  \right \|}^2 | \w_{c, i^{\star}} \in \mathcal{H} \right \}\notag \\
  &\: \ \ \ \ \ + \mu^2  \E \left \{ {\left \|\nabla J(\w_{c, i^{\star}}) \right \|}^2 | \w_{c, i^{\star}} \in \mathcal{H} \right \} \notag \\
  &\: \ \ \ \ \ + \mu^2 \E \left \{  {\left \| \boldsymbol{d}_{i^{\star}+i} \right \|}^2 | \w_{c, i^{\star}} \in \mathcal{H} \right \}  \Big) O(\mu^2) \notag \\
  \le&\: \E \bigg \{ \Big \| \left( I - \mu \boldsymbol{H}_{i^{\star}+i} \right)  \widetilde{\w}_{i}^{i^{\star}}   + \mu \nabla J(\w_{c, i^{\star}}) \notag \\
  &\: \ \ \ \ \ \ \ + \mu \boldsymbol{d}_{i^{\star}+i}\Big \|^4 | \w_{c, i^{\star}} \in \mathcal{H} \bigg \} + O(\mu^4) \notag \\
  &\:+ \Big( {(1+\mu \delta)}^2 \E \left \{ {\left \| \widetilde{\w}_{i^{\star}}^{i}\right \|}^2 | \w_{c, i^{\star}} \in \mathcal{H} \right \} \notag \\
  &\: \ \ \ \ \ + \mu^2  \E \left \{ {\left \|\nabla J(\w_{c, i^{\star}}) \right \|}^2 | \w_{c, i^{\star}} \in \mathcal{H} \right \} \notag \\
  &\: \ \ \ \ \ + \mu^2 \E \left \{  {\left \| \boldsymbol{d}_{i^{\star}+i} \right \|}^2 | \w_{c, i^{\star}} \in \mathcal{H} \right \} \Big) O(\mu^2) \notag \\
  \le&\: \E \bigg \{ \Big \| \left( I - \mu \boldsymbol{H}_{i^{\star}+i} \right)  \widetilde{\w}_{i}^{i^{\star}}   + \mu \nabla J(\w_{c, i^{\star}}) \notag \\
  &\: \ \ \ \ \ \ \ + \mu \boldsymbol{d}_{i^{\star}+i}\Big \|^4 | \w_{c, i^{\star}} \in \mathcal{H} \bigg \} + O(\mu^4) \notag \\
  &\:+ \left( {(1+\mu \delta)}^2 O(\mu) + \mu^2  O(\mu) + \mu^2 \frac{O(\mu^2)}{\pi_{i^{\star}}^{\mathcal{H}}} \right) O(\mu^2) \notag \\
  =&\: \E \bigg \{ \Big \| \left( I - \mu \boldsymbol{H}_{i^{\star}+i} \right)  \widetilde{\w}_{i}^{i^{\star}}   + \mu \nabla J(\w_{c, i^{\star}}) \notag \\
  &\: \ \ \ \ \ \ \ + \mu \boldsymbol{d}_{i^{\star}+i}\Big \|^4 | \w_{c, i^{\star}} \in \mathcal{H} \bigg \} + O(\mu^3) + \frac{O(\mu^6)}{\pi_{i^{\star}}^{\mathcal{H}}}
\end{align}
\else \begin{align}
  &\: \E \left \{  {\left \| \widetilde{\w}_{i+1}^{i^{\star}} \right \|}^4 | \w_{c, i^{\star}} \in \mathcal{H} \right \} \notag \\
  \le&\: \E \bigg \{ \Big \| \left( I - \mu \boldsymbol{H}_{i^{\star}+i} \right)  \widetilde{\w}_{i}^{i^{\star}}   + \mu \nabla J(\w_{c, i^{\star}}) \notag \\
  &\: \ \ \ \ \ \ \ + \mu \boldsymbol{d}_{i^{\star}+i}\Big \|^4 | \w_{c, i^{\star}} \in \mathcal{H} \bigg \} + O(\mu^3) + \frac{O(\mu^6)}{\pi_{i^{\star}}^{\mathcal{H}}}
\end{align} \fi
Finally, from Jensen's inequality, we find for \( 0 < \alpha < 1 \):
\begin{align}
  \|a + b\|^4 &=  \frac{1}{\alpha^3} \left \| a \right \|^4 + \frac{1}{{(1-\alpha)}^3} \left \| b \right \|^4 \label{eq:jensens_fourth}
\end{align}
and hence for \( \alpha = 1 - \mu \delta \) and \( 0 < \mu < \frac{1}{\delta} \):
\begin{align}
  &\:\E \bigg \{ \Big \| \left( I - \mu \boldsymbol{H}_{i^{\star}+i} \right)  \widetilde{\w}_{i}^{i^{\star}}   + \mu \nabla J(\w_{c, i^{\star}}) \notag \\
  &\: \ \ \ \ \ \ \ + \mu \boldsymbol{d}_{i^{\star}+i}\Big \|^4 | \w_{c, i^{\star}} \in \mathcal{H} \bigg \} \notag \\
  \stackrel{\eqref{eq:jensens_fourth}}{\le}&\: \frac{{(1+\mu \delta)}^4}{{\left(1-{\mu \delta}\right)}^3} \E \left \{ {\left \| \widetilde{\w}_{i}^{i^{\star}} \right \|}^4| \w_{c, i^{\star}} \in \mathcal{H} \right \} \notag \\
  &\: + \frac{\mu^4}{\mu^3 \delta^3} \E \left \{ {\left \|\nabla J(\w_{c, i^{\star}}) + \boldsymbol{d}_{i^{\star}+i}\right \|}^4| \w_{c, i^{\star}} \in \mathcal{H} \right \} \notag \\
  \ifarx \stackrel{\eqref{eq:jensens_fourth}}{\le}&\: \frac{{(1+\mu \delta)}^4}{{\left(1-{\mu \delta}\right)}^3} \E \left \{ {\left \| \widetilde{\w}_{i}^{i^{\star}} \right \|}^4 | \w_{c, i^{\star}} \in \mathcal{H} \right \} \notag \\
  &\: + 8 \frac{\mu}{\delta^3} \bigg( \E \left \{ {\left \|\nabla J(\w_{c, i^{\star}})\right \|}^4 | \w_{c, i^{\star}} \in \mathcal{H} \right \} \notag \\
  &\: \ \ \ \ \ \ \ \ \ \ + \E \left \{ {\left \|\boldsymbol{d}_{i^{\star}+i}\right \|}^4 | \w_{c, i^{\star}} \in \mathcal{H} \right \}\bigg) \notag \\ \fi
  \ifarx \le&\: \frac{{(1+\mu \delta)}^4}{{\left(1-{\mu \delta}\right)}^3} \E \left \{ {\left \| \widetilde{\w}_{i}^{i^{\star}} \right \|}^4 | \w_{c, i^{\star}} \in \mathcal{H} \right \} \notag \\
  &\: + 8 \frac{\mu}{\delta^3} \left( O(\mu^2) + \frac{O(\mu^4)}{\pi_i^{\mathcal{H}}} \right) \notag \\ \fi
  \le&\: \frac{{(1+\mu \delta)}^4}{{\left(1-{\mu \delta}\right)}^3} \E \left \{ {\left \| \widetilde{\w}_{i}^{i^{\star}} \right \|}^4 | \w_{c, i^{\star}} \in \mathcal{H} \right \} + O(\mu^3) + \frac{O(\mu^5)}{\pi_i^{\mathcal{H}}}
\end{align}
\ifarx \else where the last line follows after applying~\eqref{eq:jensens_fourth} to \( {\left \|\nabla J(\w_{c, i^{\star}}) + \boldsymbol{d}_{i^{\star}+i}\right \|}^4 \). \fi\ Hence,
\begin{align}
  &\: \E \left \{  {\left \| \widetilde{\w}_{i+1}^{i^{\star}} \right \|}^4 | \w_{c, i^{\star}} \in \mathcal{H} \right \} \notag \\
  \le&\: \frac{{(1+\mu \delta)}^4}{{\left(1-{\mu \delta}\right)}^3} \E \left \{ {\left \| \widetilde{\w}_{i}^{i^{\star}} \right \|}^4 | \w_{c, i^{\star}} \in \mathcal{H} \right \} + O(\mu^3) + \frac{O(\mu^5)}{\pi_i^{\mathcal{H}}}
\end{align}
Recall again that \( \widetilde{\w}_{0}^{i^{\star}} = 0 \) and therefore iterating yields:
\begin{align}
  &\: \E \left \{ {\left \| \widetilde{\w}_{i}^{i^{\star}} \right \|}^4 | \w_{c, i^{\star}} \in \mathcal{H} \right \} \notag \\
  \le&\: \left( \sum_{n = 0}^{i-1} {\left( \frac{{(1+\mu \delta)}^4}{{\left(1-{\mu \delta}\right)}^3} \right)}^n \right) \left( O(\mu^3) + \frac{O(\mu^5)}{\pi_i^{\mathcal{H}}} \right) \notag \\
  =&\: \frac{1 - {\left( \frac{{(1+\mu \delta)}^4}{{\left(1-{\mu \delta}\right)}^3} \right)}^i}{1 - \frac{{(1+\mu \delta)}^4}{{\left(1-{\mu \delta}\right)}^3}} \left( O(\mu^3) + \frac{O(\mu^5)}{\pi_i^{\mathcal{H}}} \right) \notag \\
  \ifarx =&\: \frac{\left( {\left( \frac{{(1+\mu \delta)}^4}{{\left(1-{\mu \delta}\right)}^3} \right)}^i - 1 \right){(1-\mu \delta)}^3}{{(1+\mu \delta)}^4 - {\left(1-{\mu \delta}\right)}^3} \left( O(\mu^3) + \frac{O(\mu^5)}{\pi_i^{\mathcal{H}}} \right) \notag \\ \fi
  \le&\: \frac{{\left( \frac{{(1+\mu \delta)}^4}{{\left(1-{\mu \delta}\right)}^3} \right)}^i - 1}{{(1+\mu \delta)}^4 - {\left(1-{\mu \delta}\right)}^3} \left( O(\mu^3) + \frac{O(\mu^5)}{\pi_i^{\mathcal{H}}} \right) \notag \\
  \stackrel{(a)}{\le}&\: \frac{{\left( \frac{{(1+\mu \delta)}^4}{{\left(1-{\mu \delta}\right)}^3} \right)}^i - 1}{O(\mu)}\left( O(\mu^3) + \frac{O(\mu^5)}{\pi_i^{\mathcal{H}}} \right) \notag \\
  \ifarx =&\: \left( {\left( \frac{{(1+\mu \delta)}^4}{{\left(1-{\mu \delta}\right)}^3} \right)}^i - 1 \right) O(\mu^2) \notag \\
  \le&\: \left( {\left( \frac{{(1+\mu \delta)}^4}{{\left(1-{\mu \delta}\right)}^3} \right)}^{\frac{T}{\mu}} - 1 \right) \left( O(\mu^2) + \frac{O(\mu^4)}{\pi_i^{\mathcal{H}}} \right) \notag \\ \fi
  \le&\: O(\mu^2) + \frac{O(\mu^4)}{\pi_i^{\mathcal{H}}}
\end{align}
where in \( (a) \) we expanded:
\begin{align}
  &\: {(1+\mu \delta)}^4 - {(1-\mu \delta)}^3 \notag \\
  =&\: 1 + 4 \mu \delta + O(\mu^2) - 1 + 3 \mu \delta - O(\mu^2) = O(\mu)
\end{align}
and the last step follows from Lemma~\ref{LEM:LIMITING_RESULTS}. This establishes~\eqref{eq:mf_stability}. Eq.~\eqref{eq:mt_stability} then follows from Jensen's inequality via:
\ifarx \begin{align}
  &\: \E \left \{ {\left \| \widetilde{\w}_{i}^{i^{\star}} \right \|}^3 | \w_{c, i^{\star}} \in \mathcal{H} \right \} \notag \\
  \le&\: {\left( \E \left \{ {\left \| \widetilde{\w}_{i}^{i^{\star}} \right \|}^4 | \w_{c, i^{\star}} \in \mathcal{H} \right \} \right)}^{3/4} \notag \\
  \le&\: {\left( O(\mu^2) + \frac{O(\mu^4)}{\pi_{i^{\star}}^{\mathcal{H}}} \right)}^{3/4} \notag \\
  \ifarx =&\: O(\mu^{3/2}) + \frac{O(\mu^3)}{{\left(\pi_{i^{\star}}^{\mathcal{H}}\right)}^{4/3}} \notag \\ \fi
  \le&\: O(\mu^{3/2}) + \frac{O(\mu^3)}{{\pi_{i^{\star}}^{\mathcal{H}}}}
\end{align}
\else \begin{align}
  &\: \E \left \{ {\left \| \widetilde{\w}_{i}^{i^{\star}} \right \|}^3 | \w_{c, i^{\star}} \in \mathcal{H} \right \} \le\: {\left( \E \left \{ {\left \| \widetilde{\w}_{i}^{i^{\star}} \right \|}^4 | \w_{c, i^{\star}} \in \mathcal{H} \right \} \right)}^{3/4} \notag \\
  \le&\: {\left( O(\mu^2) + \frac{O(\mu^4)}{\pi_{i^{\star}}^{\mathcal{H}}} \right)}^{3/4} \le\: O(\mu^{3/2}) + \frac{O(\mu^3)}{{\pi_{i^{\star}}^{\mathcal{H}}}}
\end{align} \fi
We now study the difference between the short-term model~\eqref{eq:long_term_recursive} and the true recursion~\eqref{eq:error_recursion}. We have:
\begin{align}
  &\: \w_{c, i^{\star}+i+1} - \w_{c, i^{\star}+i+1}' \notag \\
  \ifarx =&\: - \widetilde{\w}{}^{i^{\star}}_{i+1} + \widetilde{\w}'{}^{i^{\star}}_{i+1} \notag \\ \fi
  =&\: - \left( I - \mu \boldsymbol{H}_{i^{\star} + i} \right)  \widetilde{\w}_{i}^{i^{\star}}  - \mu {\nabla J} (\w_{c, i^{\star}}) - \mu \boldsymbol{d}_{i^{\star}+i} - \mu \s_{i^{\star}+i+1} \notag \\
  &\: + \left( I - \mu \nabla^2 J( \w_{c, i^{\star}}) \right)  \widetilde{\w}'{}^{i^{\star}}_{i}  + \mu {\nabla J} (\w_{c, i^{\star}}) + \mu \s_{i^{\star}+i+1} \notag \\
  =&\: - \left( I - \mu \boldsymbol{H}_{i^{\star} + i} \right)  \widetilde{\w}_{i}^{i^{\star}} - \mu \boldsymbol{d}_{i^{\star}+i}\notag + \left( I - \mu \nabla^2 J( \w_{c, i^{\star}}) \right)  \widetilde{\w}'{}^{i^{\star}}_{i}  \notag \\
  =&\: \left( I - \mu \nabla^2 J(\w_{c, i^{\star}}) \right) \left( \w_{c, i^{\star}+i} - \w_{c, i^{\star}+i}' \right) - \mu \boldsymbol{d}_{i^{\star}+i} \notag \\
  &\:  + \mu \left( \boldsymbol{H}_{i^{\star}+i} - \nabla^2 J( \w_{c, i^{\star}}) \right)  \widetilde{\w}_{i}^{i^{\star}}\label{eq:intermediate_13513}
\end{align}
Before proceeding, note that the difference between the Hessians in the driving term can be bounded as:
\begin{align}
  &\:\left \| \nabla^2 J(\w_{c, i^{\star}}) - \boldsymbol{H}_{i^{\star} + i} \right \| \notag \\
  \ifarx =&\: \left \| \nabla^2 J(\w_{c, i^{\star}}) - \int_0^1 \nabla^2 J\left( (1-t) \w_{c, i^{\star}+i} + t \w_{c, i^{\star}} \right) dt \right \| \notag \\ \fi
  {=}&\: \left \| \int_0^1 \left(\nabla^2 J(\w_{c, i^{\star}}) - \nabla^2 J\left( (1-t) \w_{c, i^{\star}+i} + t \w_{c, i^{\star}} \right)\right) dt \right \| \notag \\
  \stackrel{(a)}{\le}&\: \int_0^1 \left \| \nabla^2 J(\w_{c, i^{\star}}) - \nabla^2 J\left( (1-t) \w_{c, i^{\star}+i} + t \w_{c, i^{\star}} \right)\right \| dt  \notag \\
  \stackrel{(b)}{\le}&\: \rho \int_0^1 \left \| (1-t) \w_{c, i^{\star}} - (1-t) \w_{c, i^{\star}+i} \right \| dt  \notag \\
  =&\: \rho \left \| \widetilde{\w}_{i}^{i^{\star}} \right \| \int_0^1 (1-t) dt =\: \frac{\rho}{2} \left \| \widetilde{\w}_{i}^{i^{\star}} \right \|\label{eq:lipschitz_driving}
\end{align}
where \( (a) \) follows Jensen's inequality and \( (b) \) follows form the Lipschitz Hessian assumption~\ref{as:lipschitz_hessians}. Returning to~\eqref{eq:intermediate_13513} and taking norms yields:
\begin{align}
  &\:{\| \w_{c, i^{\star}+i+1} - \w_{c, i^{\star}+i+1}' \|}^2 \notag \\
  =&\: \Big \| \left( I - \mu \nabla^2 J(\w_{c, i^{\star}}) \right) \left( \w_{c, i^{\star}+i} - \w_{c, i^{\star}+i}' \right)  \notag \\
  &\: - \mu \boldsymbol{d}_{i^{\star}+i}\notag + \mu \left( \boldsymbol{H}_{i^{\star}+i} -  \nabla^2 J( \w_{c, i^{\star}}) \right)  \widetilde{\w}_{i}^{i^{\star}}  \Big \|^2 \notag \\
  \stackrel{(a)}{\le}&\: \frac{1}{1 - \mu \delta}{\left \| \left( I - \mu \nabla^2 J(\w_{c, i^{\star}}) \right) \left( \w_{c, i^{\star}+i} - \w_{c, i^{\star}+i}' \right) \right \|}^2  \notag \\
  &\: + \frac{\mu^2}{\mu \delta} {\left \|  \boldsymbol{d}_{i^{\star}+i} + \left( \boldsymbol{H}_{i^{\star}+i} - \nabla^2 J( \w_{c, i^{\star}}) \right)  \widetilde{\w}_{i}^{i^{\star}}  \right \|}^2 \notag \\
  \stackrel{(b)}{\le}&\: \frac{1}{1 - \mu \delta}{\left \| \left( I - \mu \nabla^2 J(\w_{c, i^{\star}}) \right) \left( \w_{c, i^{\star}+i} - \w_{c, i^{\star}+i}' \right) \right \|}^2  \notag \\
  &\: + 2 \frac{\mu}{\delta} \left ( {\left \| \boldsymbol{d}_{i^{\star}+i} \right \|}^2 + {\left \| \left( \boldsymbol{H}_{i^{\star}+i} - \nabla^2 J( \w_{c, i^{\star}}) \right)  \widetilde{\w}_{i}^{i^{\star}}  \right \|}^2 \right) \notag \\
  \stackrel{\eqref{eq:lipschitz_driving}}{\le}&\: \frac{{(1+\mu \delta)}^2}{1 - \mu \delta}{\left \| \w_{c, i^{\star}+i} - \w_{c, i^{\star}+i}' \right \|}^2  \notag \\
  &\: + 2 \frac{\mu}{\delta} \left ( {\left \| \boldsymbol{d}_{i^{\star}+i} \right \|}^2 + \frac{\rho}{2}{\left \| \widetilde{\w}_{i}^{i^{\star}} \right \|}^4 \right)
\end{align}
where \( (a) \) again follows from Jensen's inequality~\eqref{eq:jensens_second} with \( \alpha = 1 - \mu \delta \) and \( (b) \) follows from the same inequality with \( \alpha = \frac{1}{2} \). Taking expecations over \( \w_{c, i^{\star}} \in \mathcal{H} \) yields:
\begin{align}
  &\:\E \left \{ {\| \w_{c, i^{\star}+i+1} - \w_{c, i^{\star}+i+1}' \|}^2 | \w_{c, i^{\star}} \in \mathcal{H} \right \} \notag \\
  \le&\: \frac{{(1+\mu \delta)}^2}{1 - \mu \delta} \E \left \{ {\left \| \w_{c, i^{\star}+i} - \w_{c, i^{\star}+i}' \right \|}^2 | \w_{c, i^{\star}} \in \mathcal{H} \right \} \notag \\
  &\: + 2 \frac{\mu}{\delta} \E \left \{ \left \| \boldsymbol{d}_{i^{\star}+i} \right \|^2 | \w_{c, i^{\star}} \in \mathcal{H} \right \} \notag \\
  &\: + \frac{\rho \mu}{\delta} \E \left \{ \left \|  \widetilde{\w}_{i}^{i^{\star}} \right \|^4 | \w_{c, i^{\star}} \in \mathcal{H} \right \} \notag \\
  \stackrel{(a)}{\le}&\: \frac{{(1+\mu \delta)}^2}{1 - \mu \delta} \E {\left \| \w_{c, i^{\star}+i} - \w_{c, i^{\star}+i}' \right \|}^2 + O(\mu^3) + \frac{O(\mu^3)}{\pi_{i^{\star}}^{\mathcal{H}}}
\end{align}
where \( (a) \) follows from the bound on the network disagreement in Lemma~\ref{LEM:DEVIATION_BOUNDS}.

Since both the true and the short-term model are initialized at \( \w_{c, i^{\star}} \), we have \( \w_{c, i^{\star}+0} - \w_{c, i^{\star}+0}' = 0 \). Iterating and applying the same argument as above leads to:
\begin{align}
  \E {\| \w_{c, i^{\star}+i+1} - \w_{c, i^{\star}+i+1}' \|}^2 \le O(\mu^2) + \frac{O(\mu^2)}{\pi_{i^{\star}}^{\mathcal{H}}}
\end{align}
which is~\eqref{eq:model_deviation}. %

%
\bibliographystyle{IEEEbib}
\bibliography{nonconvex}

\end{document}